\definecolor{mydarkgreen}{RGB}{39,130,67}
\definecolor{mydarkred}{RGB}{192,47,25}
\def \O{\mathcal O}
\def\D{\mathcal D}
\def\X{\mathcal X}
\def\Y{\mathcal Y}
\def\R{\mathbb R}
\def\E{\mathbb E}
\def\Prob{\mathbb P}
\def\Z{\mathcal{Z}}
\def\e{\epsilon}
\def\la{\langle}
\def\ra{\rangle}
\def\vp{\varphi}
\def\be{\boldsymbol{e}}
\def\lm{\lambda}
\newcommand{\cmark}{{\ding{51}}}
\newcommand{\xmark}{{\ding{55}}}
\newtheorem{theorem}{Theorem}
\newtheorem{lemma}{Lemma}
 \newtheorem{example}{Example}
\newtheorem{remark}{Remark}
\newtheorem{assumption}{Assumption}
\def\dd#1{{\color{blue}#1}} 
\title{Gradient-Free  Optimization for Non-Smooth Saddle Point Problems under Adversarial Noise}
\author{
 Darina   Dvinskikh\\
  MIPT, \\
    IITP  RAS,\\
  ISP RAS Research Center for Trusted Artificial Intelligence,  \\
  \texttt{dviny.d@yandex.ru}
  \And
  Vladislav Tominin\\
 MIPT\\
  \texttt{tominin.vd@phystech.edu}
  \And
    Yaroslav  Tominin\\
    MIPT\\
  \texttt{tominin.yad@phystech.edu}
  \And
  Alexander  Gasnikov\\
  MIPT,\\
  IITP  RAS,\\
  ISP RAS Research Center for Trusted Artificial Intelligence\\
  \texttt{gasnikov@yandex.ru}
}
\begin{document}

\maketitle

\begin{abstract}
We consider non-smooth saddle point optimization problems. To solve these problems, we propose a zeroth-order method under bounded or Lipschitz continuous noise, possible adversarial. In contrast to the state-of-the-art algorithms, our algorithm is optimal in terms of both criteria: oracle calls complexity and the maximum value of admissible noise. The proposed method is simple and easy to implement as it is built  on zeroth-order version of the stochastic mirror descent. The convergence analysis is given in terms of the average and probability. We also pay special attention to the duality gap $r$-growth condition $(r\geq 1)$, for which we provide a modification of our algorithm using the restart technique. We also comment on infinite noise variance and upper bounds in the case of Lipschitz noise. The results obtained in this paper are significant not only for saddle point problems but also for convex optimization.
\end{abstract}

\section{Introduction}\label{sec:intro}
In this paper, we consider  stochastic non-smooth \textit{saddle point} problems of the following form
\begin{equation}\label{eq:min_max_problem}
    \min_{x\in \X} \max_{y\in \Y} ~f(x,y),
\end{equation}
where $f(x,y) \triangleq \E_\xi \left[f(x,y,\xi)\right]$ is the expectation, w.r.t. $\xi \in \Xi$,  $f: \X \times\Y \to \R $   is convex-concave and Lipschitz continuous, and  $\X \subseteq \R^{d_x}$, $\Y \subseteq \R^{d_y}$ are  convex compact sets. The standard interpretation of such min-max problems is the antagonistic game between a learner and an adversary, where the equilibria are the saddle points  \cite{neumann1928theorie}.
Now the interest in   saddle point problems 
is renewed due to the popularity of generative adversarial networks (GANs), whose training involves solving  min-max problems \cite{goodfellow2014generative,chen2017zoo}. 

Motivated by  many applications in the field of  reinforcement learning \cite{choromanski2018structured,mania2018simple} and statistics,  where only a  black-box access to  objective values is available, we consider \textit{zeroth-order} oracle (also known as \textit{gradient-free} oracle).  Particularly, we mention the classical problem of adversarial multi-armed bandit \cite{flaxman2004online,bartlett2008high,bubeck2012regret}, where a learner receives a feedback  given by the function evaluations from an adversary. Thus, zeroth-order methods  \cite{conn2009introduction}  are the workhorse technique when the    gradient information is prohibitively expensive or even  not available and optimization is performed based only on the function evaluations.

\paragraph{Related Work.}

Zeroth-order methods in the non-smooth setup were developed in a wide range of works  \cite{polyak1987introduction,spall2003introduction,conn2009introduction,duchi2015optimal,shamir2017optimal,nesterov2017random,gasnikov2017stochastic,beznosikov2020gradient,gasnikov2022power}.  Particularly, in \cite{shamir2017optimal}, an optimal algorithm  was provided as an improvement to the work \cite{duchi2015optimal} for a non-smooth case but Lipschitz continuous  in stochastic convex optimization problems.
However, this algorithm uses the exact function evaluations that can be infeasible in some applications. 
Indeed,  objective $f(z,\xi)$    can be not directly observed but instead, its noisy approximation $\vp(z,\xi) \triangleq f(z,\xi) + \delta(z)$ can be queried, where $\delta(z)$ is some  noise, possibly adversarial.
This noisy-corrupted setup was considered in
many works \dd{\cite{polyak1987introduction,granichin2003randomized}}, however, such an algorithm that is optimal in terms of the number of oracle calls complexity and the maximum value of the noise has not been proposed.
For instance,   in \cite{bayandina2018gradient,beznosikov2020gradient},  optimal algorithms in terms of oracle calls complexity were proposed, however, they are not optimal in terms of the maximum value of the noise. 
 In papers \cite{risteski2016algorithms,vasin2021stopping},   algorithms are optimal in terms of the maximum value of the noise, however, they are not optimal in terms of the oracle calls complexity.  
This paper presents a new algorithm which is optimal both in terms of the inexact  oracle calls complexity and the maximum value of admissible noise. The method is built on a gradient-free version of the mirror descent  with an inexact oracle.  We consider two possible scenarios for the nature of the  noise arising in different applications: the noise is bounded  or  is Lipschitz continuous.  Table \ref{Table1} demonstrates our contribution by comparing our results with the existing optimal bounds, where $\e$ is the desired accuracy to solve problem \eqref{eq:min_max_problem} and $d$ is the problem dimension.
We notice that the results obtained for saddle point problems are also valid for convex optimization. 

\begin{table}[!h]
   \centering
    \small
    \caption{Summary of the Contribution}
    \label{Table1}
    \begin{threeparttable}
    \begin{tabular}{llllll}
      \toprule
         \scriptsize \textsc{Paper} 
        & \begin{tabular}{l} 
          \scriptsize \textsc{Problem}
        \end{tabular} 
        & \begin{tabular}{l}
      \scriptsize \textsc{Expectation}  \\ 
        \scriptsize \textsc{or Large } \\
          \scriptsize \textsc{ Deviation }
        \end{tabular}  
        & \begin{tabular}{l}
         \scriptsize \textsc{Is } \\ 
      \scriptsize \textsc{the  Noise} \\ 
      \scriptsize \textsc{Lipschitz?}
        \end{tabular} 
         & \begin{tabular}{l}
         \scriptsize \textsc{Number of} \\  \scriptsize \textsc{Oracle Calls} \\ 
        \end{tabular} 
            & \begin{tabular}{l}
      \scriptsize \textsc{Maximum Value}  \\  \scriptsize \textsc{of the Noise} 
        \end{tabular} \\
     \midrule
\tiny{\citet{bayandina2018gradient}}   &   \scriptsize{ convex}  & $\E$& \xmark   &   $\nicefrac{d}{\e^2}$ & $\nicefrac{\e^2}{d^{3/2}}$\\   
    \tiny{\citet{beznosikov2020gradient}}   &   \scriptsize{ saddle point} & $\E$ & \xmark  &  $\nicefrac{d}{\e^2}$ & $\nicefrac{\e^2}{d}$\\ 
    \tiny{\citet{vasin2021stopping}}   &   \scriptsize{ convex} & $\E$ & \xmark   &  ${\rm Poly}\left(d, \nicefrac{1}{\e}\right)$ & $\nicefrac{\e^2}{\sqrt d}$\\\ 
 
\tiny{\citet{risteski2016algorithms}}   &   \scriptsize{ convex} & $\E$ & \xmark   & $ {\rm Poly}\left(d, \nicefrac{1}{\e}\right)$ & $\max\left\{ \nicefrac{\e^{2}}{\sqrt{d}}, \nicefrac{\e}{d}\right\}$\tnote{{\color{blue}(1)}}~ ~\\ 
    { \scriptsize \textsc{This work}}     &  \scriptsize{  saddle point}  &  $\E$~ and ~$\Prob$ & \xmark   & $\nicefrac{d}{\e^2}$ & $\nicefrac{\e^2}{\sqrt d}$\\    
    
       { \scriptsize \textsc{This work}}     &  \scriptsize{  saddle point}   & $\E$~ and ~$\Prob$ & \cmark  & $\nicefrac{d}{\e^2}$ & $\nicefrac{\e}{\sqrt d} $~ \tnote{{\color{blue}(2)}} \\    
 \bottomrule
    \end{tabular} 
    
\begin{tablenotes}
 {\scriptsize
 \item[{\color{blue}(1)}] This bound  is also the upper bound up to a logarithmic factor. 
 In the large-scale setup ($\e^{-2}\lesssim d$), the maximum is reached on the second term, namely $\e^2/\sqrt{d}$.
}
\\
{\scriptsize
 \item[{\color{blue}(2)}] All of the estimates, except this one, in this column are for the maximum value of the noise. This estimate is the  estimate of the Lipschitz constant as now  the noise is Lipschitz continuous. 
}
\end{tablenotes}    
\end{threeparttable}

\end{table}



\paragraph{Contribution.}
Now we list our contribution as follows
\begin{itemize}[leftmargin=*]
    \item We provide an algorithm which is optimal in terms of number of oracle calls and maximum value of addmisible noise. We state the results about its convergence in expectation and probability 
    \item For the $r$-growth condition, we restate the  results for the proposed algorithm run with restarts
    \item We comment on how the results can be modified under infinite noise variance
    \item We comment on  `upper' bound in the case of Lipschitz noise
\end{itemize}

\paragraph{Paper Organization.} This paper is organized as follows.  In Section \ref{sec:main_alg}, we present  the main algorithm of the paper and  analysis of its convergence. 
In Section \ref{sec:rest}, under additional assumption of $r$-growth condition we restate the results for the proposed algorithm run with restarts. In Section 
\ref{sec:inf_noise}, we comment on the case of infinite noise variance. 
Finally, Section \ref{sec:upper_bound} gives some ideas about upper bounds in the case of Lipschitz noise.


\section{Zeroth-order algorithm}\label{sec:main_alg}
In this section,  we present an
algorithm (see Algorithm \ref{alg:zoSPA}) that is optimal in terms of the number of inexact zeroth-order oracle calls and the maximum value of adversarial noise.  The algorithm  is based on a gradient-free version of the stochastic  mirror descent (SMD) \cite{ben2001lectures}.  We start with some key notation, background material and assumptions.

\subsection{Notation and assumptions} 
We use $\la x,y\ra \triangleq \sum_{i=1}^d x_i y_i$ to define the inner product of $x,y \in \R^d$, where $x_i$ is
  the $i$-th component of $x$. By  norm  $\|\cdot\|_p$ we mean  the $\ell_p$-norm. Then the dual norm of the norm $\|\cdot\|_p$ is $\|\lm\|_q \triangleq \max\{\la x,\lm \ra \mid \|x\|_p \leq 1 \} $. Operator $\E[\cdot]$  is the full  expectation and operator $\E_{\xi}[\cdot]$ is the conditional expectation, w.r.t. $\xi$.   Let us introduce the embedding space $\Z \triangleq \X \times \Y$, and then  some $z\in \Z$ means $z \triangleq (x,y)$, where $x\in \X, y \in \Y$.
On this embedding space, we introduce  the $\ell_p$-norm
and  a prox-function $\omega(z)$ compatible with this norm. Then we define the  Bregman divergence associated with $\omega(z)$ as
\[V_z( v) \triangleq \omega(z) -\omega(v) - \la  \nabla \omega(v), z - v \ra \geq \|z-v\|_p^2/2, \quad \mbox{ for all } z,v \in Z.\] 
We also introduce a prox-operator as follows
\[ {\rm Prox}_z(\xi) \triangleq \arg\min_{v\in Z} \left(V_z(v) + \la \xi, v \ra\right), \quad \mbox{ for all } z \in Z.\]
Finally, we denote the $\omega$-diameter  of $\Z$ by 
$\D \triangleq \max\limits_{z,v\in \Z}\sqrt{2V_{z}(v)}= \widetilde{\O}\left(\max\limits_{z,v\in \Z}\|z-v\|_p\right)$. Here $\widetilde \O\left(\cdot\right)$ is $\O\left(\cdot\right)$ up to a $\sqrt{\log d}$-factor.

\begin{assumption}[Lischitz continuity of the objective]\label{ass:Lip_obj}
 Function $f(z,\xi)$ is $M_2(\xi)$-Lipschitz continuous in $z\in \Z$ w.r.t. the $\ell_2$-norm, i.e.,   for all  $z_1,z_2\in \Z$ and $\xi \in \Xi$, 
 \[|f(z_1, \xi)- f(z_2,\xi)| \leq M_2(\xi) \|z_1- z_2\|_2.\] 
 Moreover, there exists a positive constant $ M_2$ such that  $\E\left[M^2_2(\xi)\right] \leq M_2^2$.
\end{assumption}

\begin{assumption}[Boundedness of the noise]\label{ass:err_noise}
For all  $z \in \Z$, it holds $|\delta(z)| \leq \Delta.$
\end{assumption}

\begin{assumption}[Lipschitz continuity of the noise]\label{ass:Lip_noise}
Function $\delta(z)$ is $M_{2,\delta}$-Lipschitz continuous in $z\in \Z$ w.r.t. the $\ell_2$-norm, i.e., for all $z_1, z_2 \in \Z$, \[|\delta(z_1)- \delta(z_2)| \leq M_{2,\delta} \|z_1- z_2\|_2.\]
\end{assumption}

\subsection{Black-box oracle and gradient approximation.} We assume that we can query   zeroth-order oracle corrupted by an adversarial noise $\delta(z)$: 
\begin{equation}\label{eq:zerothorder}
    \vp(z,\xi) \triangleq f(z,\xi) + \delta(z).
\end{equation}
 The gradient of $\vp(z,\xi)$ from \eqref{eq:zerothorder}, w.r.t. $z$, can be approximated by the function evaluations in two random points closed to $z$. To do so, we define  vector $\be$ picked uniformly at random from the Euclidean unit sphere $\{\be:\|\be\|_2= 1\}$. Let $\be \triangleq (\be_x^\top, -\be_y^\top)^\top$, where ${\rm dim}(\be_x) \triangleq d_x$, ${\rm dim}(\be_y) \triangleq d_y$ and ${\rm dim}(\be) \triangleq d = d_x+d_y$. Then the gradient  of $ \vp(z,\xi)$ can be estimated by the following approximation with a  small variance \cite{shamir2017optimal}:
\begin{equation}\label{eq:grad_approx}
    g(z,\xi, \be) = \frac{d}{2\tau}\left(\vp(z+\tau \be,\xi) - \vp(z-\tau \be,\xi)   \right) \begin{pmatrix} \be_x \\ -\be_y \end{pmatrix},
\end{equation}
where $\tau>0$ is some constant. 

\subsection{Randomized smoothing.}
  Unfortunately, this standard zeroth-order approximation \eqref{eq:grad_approx} is a poor estimator for subgradients of
 a non-smooth objective. To support this, let us consider the following example.

    \begin{wrapfigure}{R}{0.38\textwidth}
    \begin{minipage}{0.38\textwidth}
\vspace{-0.3cm}
 \hspace{0.3cm}  \includegraphics[width=0.9\linewidth]{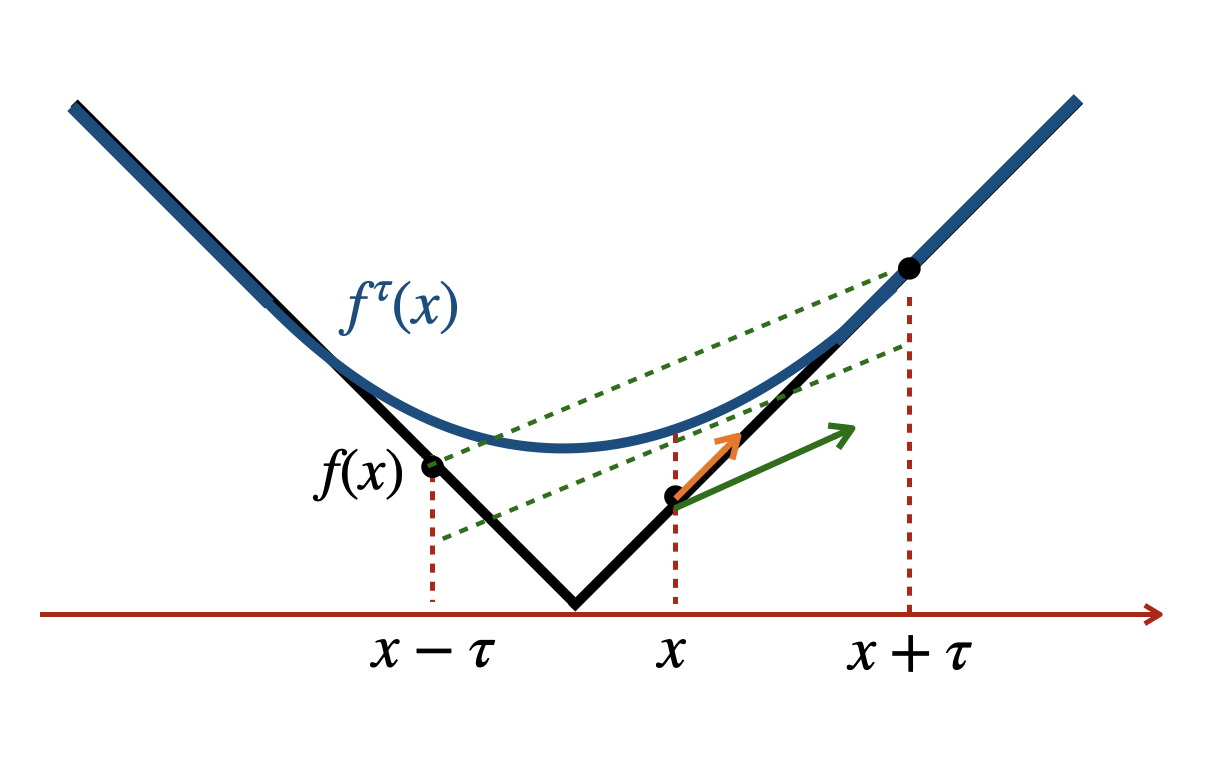}
   \vspace{-0.6cm}
 \caption{Smooth approximation $f^{\tau}(x)$ of a non-smooth function $f(x)$}
 \label{fig:non_smooth_funct}
    \end{minipage}
       \vspace{-0.2cm}
      \end{wrapfigure}

\begin{example}[one-dimensional case]
$f(x) = |x|$.  Zeroth-order approximation \eqref{eq:grad_approx} of subgradients of $f(x)$ can be simplified as
\begin{equation}\label{eq:simpgrad}
    g(x) = \frac{1}{2\tau}(f(x+\tau) - f(x-\tau)).
\end{equation}
For point $x \in [-\tau,\tau]$,
$g(x) =\pm  {x}/{\tau}$.
 However, for all $x>0$, $\nabla f(x)=1$ and for all  $x<0$, $\nabla f(x)=-1$.
\end{example}

Since  the problem \eqref{eq:min_max_problem} is non-smooth, we  
introduce the following smooth approximation of a non-smooth function  (see Figure \ref{fig:non_smooth_funct} as an example) 
\begin{equation}\label{eq:smoothedfunc}
    f^\tau(z) \triangleq \E_{\tilde \be} f(z+\tau \tilde \be),
\end{equation}
where $\tau>0$ and $ \tilde\be$ is a vector picked uniformly at random from the Euclidean unit ball: $\{\tilde\be:\|\tilde\be\|_2\leq 1\}$. Function  $f^\tau(z)$ can be referred as a smooth approximation of $f(z)$ and it will be used only for  deriving the convergence rate of proposed algorithm. Here $f(z)\triangleq \E f(z,\xi)$.  

The next lemma presents the quality of such an approximation.
\begin{lemma}\label{lm:shamirSmoothFun}
Let $f(z)$ be $M_2$-Lipschitz continuous function. Then for $f^\tau(z)$ from \eqref{eq:smoothedfunc}, it holds
\[
\sup_{z\in \Z}|f^\tau(z) - f(z)|\leq \tau M_2.
\]
\end{lemma}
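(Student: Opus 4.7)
The plan is to bound the pointwise difference $|f(z+\tau\tilde\be) - f(z)|$ using the Lipschitz assumption and then pass the bound through the expectation over $\tilde\be$. Since $f^\tau(z) = \E_{\tilde\be}[f(z+\tau\tilde\be)]$ and $f(z)$ is a constant with respect to $\tilde\be$, I can write
\[
f^\tau(z) - f(z) \;=\; \E_{\tilde\be}\bigl[f(z+\tau\tilde\be) - f(z)\bigr],
\]
and then by Jensen's inequality (or simply monotonicity of expectation combined with $|\E[X]|\le \E|X|$),
\[
|f^\tau(z) - f(z)| \;\le\; \E_{\tilde\be}\bigl[|f(z+\tau\tilde\be) - f(z)|\bigr].
\]

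Next I would invoke the $M_2$-Lipschitz continuity of $f$ with respect to the $\ell_2$-norm, which yields $|f(z+\tau\tilde\be) - f(z)| \le M_2 \|\tau\tilde\be\|_2 = \tau M_2 \|\tilde\be\|_2$. Since $\tilde\be$ is drawn uniformly from the Euclidean unit ball $\{\tilde\be : \|\tilde\be\|_2 \le 1\}$, we have $\|\tilde\be\|_2 \le 1$ deterministically, hence $\E_{\tilde\be}[\|\tilde\be\|_2] \le 1$. Plugging back gives $|f^\tau(z) - f(z)| \le \tau M_2$, and taking the supremum over $z \in \Z$ yields the claimed inequality.

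There is no substantive obstacle; the proof is a direct application of the Lipschitz assumption together with the fact that the smoothing distribution is supported on the unit ball. The only mild subtlety is that Assumption~\ref{ass:Lip_obj} states Lipschitz continuity of $f(\cdot,\xi)$ with random constant $M_2(\xi)$ satisfying $\E[M_2(\xi)^2]\le M_2^2$; the required Lipschitz continuity of $f(z) = \E_\xi f(z,\xi)$ with constant $M_2$ is obtained by taking expectation over $\xi$ in the pointwise Lipschitz bound and applying Jensen's inequality, $\E[M_2(\xi)]\le \sqrt{\E[M_2(\xi)^2]}\le M_2$.
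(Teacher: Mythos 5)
Your proof is correct and follows essentially the same route as the paper: write $f^\tau(z)-f(z)=\E_{\tilde\be}[f(z+\tau\tilde\be)-f(z)]$, pull the absolute value inside the expectation, apply the Lipschitz bound, and use $\|\tilde\be\|_2\le 1$. You are in fact slightly more careful than the paper's one-line argument, which writes an equality where Jensen's inequality is needed and glosses over both the ball-versus-sphere point and the passage from $M_2(\xi)$ to a Lipschitz constant for $f(z)=\E_\xi f(z,\xi)$.
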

\begin{proof}
By the definition of $f^\tau(z)$ from \eqref{eq:smoothedfunc} and Assumption \ref{ass:Lip_obj} we have
\begin{align*}
    |f^\tau(z) - f(z)|
    &= \left|\E_{\be}\left[f(z+\tau\be)\right]  -f(z)\right| \leq \E_{\be}\left[|f(z+\tau\be) - f(z)| \right] \leq \E\left[ M_2 \|\tau \be\|_2\right] =  M_2 \tau.  
\end{align*}
\end{proof}
\begin{lemma}\label{lm:flax}
 Function  $f^\tau(z)$ is differentiable with the following gradient
\[
\nabla  f^\tau(z) = \E_{{\be}}\left[ \frac{d}{\tau} f(z+\tau {\be})\be \right].
\]
\end{lemma}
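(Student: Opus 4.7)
\textbf{Proof plan for Lemma \ref{lm:flax}.} The plan is to reduce the claim to the classical divergence theorem by rewriting the ball-averaged function $f^\tau$ as a volume integral, differentiating it, and then converting the resulting surface integral into a sphere-uniform expectation. This is the standard ``one-point estimator'' identity of Flaxman--Kalai--McMahan, restated for our notation.

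First, I would express the smoothing as a volume integral and change variables. Letting $V_d$ denote the volume of the unit Euclidean ball in $\R^d$ and substituting $v = z+\tau\tilde\be$,
\[ f^\tau(z) \;=\; \frac{1}{V_d}\int_{\|\tilde\be\|_2\le 1} f(z+\tau\tilde\be)\,d\tilde\be \;=\; \frac{1}{\tau^d V_d}\int_{\|u\|_2\le \tau} f(z+u)\,du, \]
where I also used the translation invariance of Lebesgue measure to make the integration region independent of $z$. Next, assuming $f$ is smooth, I differentiate under the integral sign and apply the divergence theorem to each coordinate: the outward unit normal on $\partial B(0,\tau)$ is $u/\tau$, hence
\[ \nabla f^\tau(z) \;=\; \frac{1}{\tau^d V_d}\int_{\|u\|_2\le \tau} \nabla f(z+u)\,du \;=\; \frac{1}{\tau^d V_d}\int_{\|u\|_2=\tau} f(z+u)\,\frac{u}{\tau}\,dS(u). \]
Finally, parametrizing $u=\tau\be$ with $\be$ on the unit Euclidean sphere gives $dS(u)=\tau^{d-1}dS(\be)$, and combining with the standard identity $\mathrm{Area}(\{\be:\|\be\|_2=1\})=d\cdot V_d$ converts the surface integral into an expectation and yields
\[ \nabla f^\tau(z) \;=\; \frac{d}{\tau}\,\E_{\be}\!\left[f(z+\tau\be)\,\be\right], \]
which is the claimed formula.

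The main obstacle is that Assumption \ref{ass:Lip_obj} only gives Lipschitz continuity of $f$, not differentiability, so the divergence theorem is not directly available. I would handle this by a standard mollification argument: convolve $f$ with a smooth bump $\eta_\varepsilon$ of scale $\varepsilon\ll\tau$, apply the derivation above to the smooth function $f\ast\eta_\varepsilon$, and pass to the limit $\varepsilon\to 0$. The left-hand side converges because $f^\tau$ depends continuously on $f$ (one can differentiate under the expectation after smoothing), and the right-hand side is continuous in $f$ in the uniform-on-compacta topology since it only involves values of $f$ on the bounded sphere $\{v:\|v-z\|_2=\tau\}$. This delivers the identity for every Lipschitz $f$ and, in particular, establishes that $f^\tau$ is differentiable everywhere with the stated gradient.
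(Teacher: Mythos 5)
Your proof is correct, and it is the complete multivariate argument, whereas the paper itself never proves Lemma~\ref{lm:flax} in general: it only illustrates the identity in the one-dimensional case in Appendix~\ref{app:intuition} (where the divergence theorem degenerates to the fundamental theorem of calculus) and otherwise treats the statement as the known result of Flaxman--Kalai--McMahan. Your chain of steps --- rewriting the ball average as $\frac{1}{\tau^d V_d}\int_{\|u\|_2\le\tau} f(z+u)\,du$, differentiating under the integral, applying the divergence theorem coordinatewise with outward normal $u/\tau$, and using $\mathrm{Area}(\{\be:\|\be\|_2=1\})=d\,V_d$ to normalize the surface integral into an expectation --- is exactly the general-dimension version of what the appendix sketches for $d=1$. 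The one genuine addition on your side is the mollification step: the paper's $d=1$ computation quietly writes $f'_x$ for a merely Lipschitz $f$ (defensible there via Rademacher and the FTC for Lipschitz functions), while your convolution-and-limit argument, using that a locally uniform limit of $C^1$ functions with locally uniformly convergent gradients is itself $C^1$, is what actually justifies the differentiability claim for all $d$ under Assumption~\ref{ass:Lip_obj} alone. In short: same idea as the paper's intended argument, but yours is the version that constitutes a proof.
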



  \begin{wrapfigure}{R}{0.45\textwidth}
    \begin{minipage}{0.45\textwidth}
     \vspace{-0.8cm}
\begin{algorithm}[H]
 \caption{Zeroth-order  \textsc{SMD}}
\label{alg:zoSPA}
\begin{algorithmic}[1]
 \REQUIRE iteration number $N$,\\
$z^1  \leftarrow \arg\min_{z\in \Z} \omega(z)$
\FOR{$k=1,\dots, N$}{
  \STATE Sample $\be^k, \xi^k$ independently
 \STATE Initialize $\gamma_k 
\to \frac{\D}{M}\sqrt{\frac{2}{N}}$ with   $M$ defined by \eqref{eq33} or \eqref{eq44}
 \STATE
 Calculate $g(z^k, \xi^k, \be^k)$ via \eqref{eq:grad_approx}
 \STATE
  $z^{k+1}  \leftarrow {\rm Prox}_{z^k}\left(\gamma_k g(z^k, \xi^k, \be^k)\right)$
}
\ENDFOR
        \ENSURE    $\hat z^N  \leftarrow \left(\sum_{i=1}^{N}\gamma_k\right)^{-1}\sum_{k=1}^{N} \gamma_k z^k$
\end{algorithmic}
\end{algorithm}
\vspace{-0.6cm}
    \end{minipage}
  \end{wrapfigure}
  \subsection{Algorithm and its convergence rate}
  Now we present zeroth-order
algorithm to solve problem \eqref{eq:min_max_problem} (see Algorithm \ref{alg:zoSPA}).   The stepsize $\gamma_k 
= \frac{\D}{M}\sqrt{\frac{2}{N}}$, where positive constant $M$   is chosen as: 
\begin{enumerate}
    \item under Assumption \ref{ass:err_noise} 
    \begin{equation}\label{eq33}
        M^2\triangleq \O\left(da_q^2M_2^2 + {d^2a_q^2\Delta^2}{\tau^{-2}}\right)
    \end{equation}
    \item under Assumption \ref{ass:Lip_noise} \begin{equation}\label{eq44}
        M^2 \triangleq \O\left( d a_q^2\left(M_2^2+M_{2,\delta}^2\right)\right),
    \end{equation}
\end{enumerate}
 where $N$ is the number of algorithm iterations and $a_q^2 \triangleq \O\left(\sqrt{\E\left[ \|\be\|_q^4\right]}\right)= \O\left( \min\{q,\log d \} d^{2/q-1} \right)  $ \cite{gorbunov2019upper}.
  
The next theorem presents the convergence rate of the  Algorithm \ref{alg:zoSPA} in terms of the expectation.
\begin{theorem}\label{cor:bound_noise}
Let  $\e$ be the desired accuracy to solve problem \eqref{eq:min_max_problem} and  $\tau$  from randomized smoothing \eqref{eq:smoothedfunc} be chosen as $\tau = \O\left(\e/M_2\right)$. Let function $f(x,y,\xi)$  satisfy the Assumption \ref{ass:Lip_obj} and one of the two following statement is true
\begin{enumerate}
    \item Assumption \ref{ass:err_noise} holds with $\Delta =\O\left( \frac{\e^2}{\D M_2\sqrt{ d}}\right)$.
    \item Assumption \ref{ass:Lip_noise} holds with
 $M_{2,\delta} =\O\left( \frac{\e}{\D \sqrt{ d}}\right)$. 
\end{enumerate}
Then
for  $ \e_{\rm sad}  \triangleq \max_{y\in \Y}  f(\hat x^N, y) - \min_{x\in \X}   f(x, \hat y^N)$ with  the output of   Algorithm \ref{alg:zoSPA} $ \hat z^N \triangleq (\hat x^N, \hat y^N)$ , it holds   $\E\left[\e_{\rm sad}(\hat z^N)\right]\leq \e$  after the following number of iterations
\[N =  \O\left( {dM_2^2\D^2 a_q^2}/{\e^{2}} \right).\]
\end{theorem}
Next we specify the Theorem \ref{cor:bound_noise} in the two following special setups: the $\ell_2$-norm and the $\ell_1$-norm in the two following examples.
\begin{example}[$\ell_2$-norm]
Let $p=2$, then $q=2$ and  $\sqrt{\E \left[ \|\be\|_2^4\right]}   = 1$. Thus, $a_2^2=1$ and $\D^2 = \max\limits_{z,v\in \Z}\|z-v\|^2_2$. Consequently, the number of iterations in the Corollary \ref{cor:bound_noise} can be rewritten as follows
\[N =  \O\left( \frac{dM_2^2}{\e^{2}}\max\limits_{z,v\in \Z}\|z-v\|^2_2\right).\]
\end{example}
\begin{example}[$\ell_1$-norm]\citet[Lemma 4]{shamir2017optimal}
Let $p=1$ then, $q=\infty$ and $\sqrt{\E \left[ \|\be\|_\infty^4\right]}   = \O\left({\frac{\log d}{d}}\right)$. Thus, $a_\infty^2=\O\left({\frac{\log d}{d}}\right)$ and $\D^2 = \O\left(\log d \max\limits_{z,v\in \Z}\|z-v\|^2_1\right)$. Consequently, the number of iterations in the Corollary \ref{cor:bound_noise} can be rewritten as follows
\[N =  \O\left( \frac{(\log d)^2 M_2^2}{\e^{2}}\max\limits_{z,v\in \Z}\|z-v\|_1^2 \right).\]
\end{example}


\begin{remark}[Variable separation]
Heretofore we assumed that 
the  proximal setups for spaces $\X$
and $\Y$ are the same. In some applications, this is not the case.  For instance, when spaces $\X$ and $\Y$ require different Bregman divergences. In this case, we  can   replace the proximal step  $z^{k+1}  \leftarrow {\rm Prox}_{z^k}\left(\gamma_k g(z^k, \xi^k, \be^k)\right)$ in Algorithm \ref{alg:zoSPA}  by two proximal steps on spaces $\X$ and $\Y$ respectively
\begin{equation*}
    x^{k+1}  \leftarrow {\rm Prox}_{x^k}\left(\gamma_k g_x(z^k, \xi^k, \be^k)\right); \qquad  y^{k+1}  \leftarrow {\rm Prox}_{y^k}\left(\gamma_k g_y(z^k, \xi^k, \be^k)\right).
\end{equation*}
\end{remark}

\begin{proof}[Proof of Theorem
\ref{cor:bound_noise}] 
For brevity, we provide the proof only under Assumption \ref{ass:err_noise}. The convergence rate under Assumption \ref{ass:Lip_noise} can be obtained similarly. By the definition $z^{k+1}= {\rm Prox}_{z^k}\left(\gamma_k g(z^k,\be^k,\xi^k) \right)$ we get  \cite{ben2001lectures}, for all $u\in \Z$
\begin{align*}
   \gamma_k \la g(z^k,\be^k,\xi^k), z^k-u\ra \leq V_{z^k}(u) - V_{z^{k+1}}(u) + \gamma_k^2\|g(z^k,\be^k,\xi^k)\|_q^2/2.   
\end{align*}
Taking the conditional  expectation w.r.t. $\xi, \be$ and summing for $k=1,\dots,N$  we obtain, for all $u\in \Z$
\begin{align}\label{eq:inmainsqE0}
 \sum_{k=1}^N \gamma_k  \E_{\be^k,\xi^k} \left[ \la g(z^k,\be^k,\xi^k), z^k-u\ra\right] 
   &\leq  V_{z^1}(u)  + \sum_{k=1}^N\frac{\gamma_k^2}{2} \E_{\be^k,\xi^k}\left[\|g(z^k,\be^k,\xi^k)\|_q^2 \right]. 
\end{align}
\begin{lemma}\label{lm:shamirEnabla}
 For $g(z,\xi,\be)$ from \eqref{eq:grad_approx}, the following holds
 under Assumption \ref{ass:Lip_obj} for $c>0$ 
 \begin{enumerate}
     \item  and Assumption  \ref{ass:err_noise}
     $\E_{\xi,\be}\left[\|g(z,\xi,\be)\|^2_q\right] \leq c  dM_2^2 a^2_q + {d^2 \Delta^2}{\tau^{-2}}a_q^2,$
     \item  and Assumption \ref{ass:Lip_noise} $\E_{\xi,\be}\left[\|g(z,\xi,\be)\|^2_q\right] \leq c  d (M_2^2+M_{2,\delta}^2)a^2_q.$
 \end{enumerate}
\end{lemma}

{\bf Step 1.} For the second term in the r.h.s of  \eqref{eq:inmainsqE0} we use Lemma \ref{lm:shamirEnabla} and get
 under Assumption  \ref{ass:err_noise}:
     \begin{equation}\label{eq:inmainsqEtwo10}
        \E_{\be^k,\xi^k}\left[\|g(z^k,\xi^k,\be^k)\|^2_q\right] \leq c  dM_2^2 a^2_q + {d^2 \Delta^2}{\tau^{-2}}a_q^2, 
     \end{equation}
 where    $c$ is some numerical constant and $\sqrt{\E \left[ \|\be^k\|_q^4\right]} \leq a_q^2$.

\begin{lemma}\label{lm:expecapproxgrad}
   For $g(z,\xi,\be)$ from \eqref{eq:grad_approx} and $f^\tau(z)$ from \eqref{eq:smoothedfunc}, the following holds 
 \begin{enumerate}
 \item under Assumption \ref{ass:err_noise}
$\E_{\xi, \be}\left[\la g(z,\xi,\be),r\ra\right] \geq \la \nabla f^\tau(z),r\ra  - {d\Delta}{\tau^{-1}} \E_{\be} \left[\left|\la \be, r \ra \right|\right],$
     \item  under Assumption \ref{ass:Lip_noise}   $\E_{\xi, \be}\left[\la g(z,\xi,\be),r\ra\right] \geq \la \nabla f^\tau(z),r\ra   - d M_{2,\delta} \E_{\be} \left[\left|\la  \be, r \ra \right|\right],$
 \end{enumerate}
 \end{lemma}

{\bf Step 2.} For the l.h.s. of   \eqref{eq:inmainsqE0} and  $u \triangleq (x^\top,y^\top)^\top$, we use Lemma \ref{lm:expecapproxgrad}
under Assumption \ref{ass:err_noise}
 \begin{align}\label{eq:eqrefeqffxyz110}
     \sum_{k=1}^N \gamma_k \E_{\be^k,\xi^k}\left[\la g(z^k,\be^k,\xi^k), z^k-u\ra \right]
     &\geq  \sum_{k=1}^N \gamma_k \la \nabla f^\tau(z^k), z^k-u \ra  \notag \\
     &-\sum_{k=1}^N \gamma_k \E_{\be^k}\left[ \left|\left\la  {d\Delta}{\tau^{-1}}  \be^k, z^k-u  \right\ra \right|\right].
\end{align}
For the first term of the r.h.s. of \eqref{eq:eqrefeqffxyz110}  we have 
\begin{align}\label{eq:eqexpfxy0}
     &\sum_{k=1}^N \gamma_k\la \nabla f^\tau(z^k), z^k-u\ra 
     =  \sum_{k=1}^N \gamma_k\left( \la \nabla_x f^\tau(x^k,y^k), x^k-x \ra  -  \la \nabla_y  f^\tau(x^k,y^k), y^k-y \ra \right) \notag \\
     &\geq  \sum_{k=1}^N \gamma_k \left( f^\tau(x^k,y^k) - f^\tau(x,y^k)  -   f^\tau(x^k,y^k) + f^\tau(x^k,y)\right) =  \sum_{k=1}^N \gamma_k (f^\tau(x^k,y)  - f^\tau(x,y^k)),
\end{align}
Then we use the fact function $f^\tau(x,y)$ is convex in $x$ and concave in $y$ and  obtain 
\begin{align}\label{eq:fineqE0}
     \left(\sum_{i=1}^{N}\gamma_k\right)^{-1} \sum_{k=1}^N \gamma_k (f^\tau(x^k,y)  - f^\tau(x,y^k))    
     &\geq  f^\tau\left(\hat x^N,y\right)  - f^\tau\left(x, \hat y^N \right) ,
\end{align}
where $ (\hat x^N, \hat y^N)$ is the output of the Algorithm \ref{alg:zoSPA}. 
Using \eqref{eq:fineqE0}  for  \eqref{eq:eqexpfxy0} we get 
\begin{align}\label{eq:fpslff0}
   \sum_{k=1}^N \gamma_k\la \nabla f^\tau(z^k), z^k-u\ra 
     &\geq  \sum_{k=1}^N \gamma_k \left(f^\tau\left( \hat x^N,y\right)  - f^\tau\left(x, \hat y^N \right) \right).
\end{align}
The next lemma is the key moment of the proof giving optimal convergence  result.  
\begin{lemma}\label{lm:dotprodvece}
Let vector $\be$ be a random unit vector from the Euclidean unit sphere $\{\be:\|\be\|_2=1\}$. Then it holds for all $r \in \R^d$
 \begin{equation*}
     \E_{ \be}\left[ \left|\la \be, r \ra \right| \right] \leq {\|r\|_2}/{\sqrt{d}}.
 \end{equation*}
\end{lemma}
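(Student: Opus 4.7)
The plan is to reduce the problem to a one-dimensional computation by exploiting the rotational invariance of the uniform distribution on the Euclidean unit sphere, and then finish by Cauchy--Schwarz together with the coordinate symmetry of $\be$.

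First I would observe that if $r = 0$ the inequality is trivial, so assume $r \neq 0$. The distribution of $\be$ is invariant under any orthogonal transformation $U$, so $\langle \be, r\rangle$ has the same distribution as $\langle U\be, r\rangle = \langle \be, U^\top r\rangle$. Choosing $U$ so that $U^\top r = \|r\|_2\, e_1$, we obtain
\[
\E_{\be}\bigl[|\langle \be, r\rangle|\bigr] \;=\; \|r\|_2 \, \E_{\be}\bigl[|\be_1|\bigr].
\]
This reduces the lemma to showing $\E[|\be_1|] \leq 1/\sqrt{d}$.

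Next I would apply Jensen's inequality (equivalently Cauchy--Schwarz) to pass from the first absolute moment to the second moment:
\[
\E_{\be}\bigl[|\be_1|\bigr] \;\leq\; \sqrt{\E_{\be}[\be_1^2]}.
\]
Finally, by the symmetry of the uniform distribution on the sphere under coordinate permutations, $\E[\be_i^2]$ is the same for every $i$, and summing gives
\[
1 \;=\; \E_{\be}\bigl[\|\be\|_2^2\bigr] \;=\; \sum_{i=1}^{d} \E_{\be}[\be_i^2] \;=\; d\, \E_{\be}[\be_1^2],
\]
so $\E[\be_1^2] = 1/d$ and thus $\E[|\be_1|] \leq 1/\sqrt{d}$. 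Combining with the reduction above yields the claim.

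There is no real obstacle here: the only points to be careful about are (i) invoking rotational invariance correctly so that the reduction to a single coordinate is rigorous, and (ii) using Jensen rather than attempting a direct calculation of $\E[|\be_1|]$ via the marginal density of $\be_1$ (which is a scaled Beta distribution and would give the sharper constant, but is unnecessary for the stated bound).
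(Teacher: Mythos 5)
Your proof is correct. The paper actually states Lemma \ref{lm:dotprodvece} without any proof (it is a standard fact, implicitly attributed to earlier work such as \cite{gasnikov2017stochastic}), so there is no in-paper argument to compare against; your route via rotational invariance, Jensen/Cauchy--Schwarz, and the identity $\sum_{i=1}^d \E[\be_i^2]=\E[\|\be\|_2^2]=1$ is exactly the standard one and supplies the missing details. One minor remark: the rotation step is not strictly needed, since Cauchy--Schwarz applied directly gives $\E[|\la \be,r\ra|]\leq\sqrt{\E[\la \be,r\ra^2]}=\sqrt{r^\top\E[\be\be^\top]r}=\|r\|_2/\sqrt{d}$, using $\E[\be\be^\top]=I/d$, which follows from the same coordinate symmetry you invoke.
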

Using this  Lemma  \ref{lm:dotprodvece}  we estimate the term $\E_{\be^k} \left[    |\la \be^k, z^k-u\ra|  \right] $ in  \eqref{eq:eqrefeqffxyz110}   
\begin{align}\label{eq:sectermErhs0}
  \E_{\be^k} \left[    \left|\la \be^k, z^k-u\ra\right|  \right] \leq  {\|z^k-u\|_2}/{\sqrt{d}}. 
\end{align}
Now  we substitute  \eqref{eq:fpslff0} and  \eqref{eq:sectermErhs0} to  \eqref{eq:eqrefeqffxyz110}, and get
 under Assumption \ref{ass:err_noise}
 \begin{align}\label{eq:eqrefeqffxyz11220}
   \sum_{k=1}^N \gamma_k   \E_{\be^k,\xi^k}\left[ \la g(z^k,\be^k,\xi^k), z^k-u\ra \right] &\geq
     \sum_{k=1}^{N}\gamma_k  \left(f^\tau\left( \hat x^N,y\right)- f^\tau\left(x, \hat y^N \right)  -  {\sqrt{d}\Delta}{\tau^{-1}}\|z^k-u\|_2\right). 
\end{align}
{\bf Step 3} (under Assumption \ref{ass:err_noise}). Now we combine  \eqref{eq:eqrefeqffxyz11220}   with   \eqref{eq:inmainsqEtwo10} for  \eqref{eq:inmainsqE0} and obtain under Assumption \ref{ass:err_noise} the following
     \begin{align}\label{eq:inmainsqEtwo1330}
     &\sum_{k=1}^{N}\gamma_k \left( f^\tau\left( \hat x^N,y\right)  - f^\tau\left(x, \hat y^N \right) 
     -   {\sqrt{d} \Delta}{\tau^{-1}} \|z^k-u\|_2  \right)\notag \\
     &\leq  V_{z^1}(u) +  \sum_{k=1}^N \frac{\gamma_k^2}{2} \left(c  dM_2^2 a^2_q + {d^2 \Delta^2 }{\tau^{-1}a_q^2}\right).
     \end{align}
Using Lemma \ref{lm:shamirSmoothFun} we obtain
 \begin{align*}
    f^\tau\left( \hat x^N,y\right) -  f^\tau\left(x, \hat y^N \right) 
    &\geq f\left( \hat x^N,y\right) -  f\left(x, \hat y^N \right)  - 2\tau M_2.
 \end{align*}
 Using this we can rewrite \eqref{eq:inmainsqEtwo1330} as follows
 \begin{align}\label{eq:exEeqE10}
       f\left( \hat x^N,y\right) -  f\left(x, \hat y^N \right) 
      &\leq 
     \frac{ V_{z^1}(u)}{\sum_{k=1}^{N}\gamma_k}
      +\frac{c  dM_2^2 a^2_q + d^2 \Delta^2\tau^{-2} a_q^2}{\sum_{k=1}^{N}\gamma_k}\sum_{k=1}^N \frac{\gamma_k^2}{2} \notag \\
      &+{ \sqrt{d} \Delta}{\tau^{-1}} \max_{k}\|z^k-u\|_2 +  2\tau M_2.
 \end{align}
  For the r.h.s. of \eqref{eq:exEeqE10}  we use the definition of  the $\omega$-diameter  of $\Z$:\\
$\D \triangleq \max_{z,v\in \Z}\sqrt{2V_{z}(v)}$ and estimate $\|z^k-u\|_2 \leq \D$ for all $z^1,\dots, Z^k$ and all $u\in \Z$.
 Using this for  \eqref{eq:exEeqE10} and   taking the maximum in $(x,y) \in (\X,\Y)$, we obtain
 \begin{align*}
    \max_{y \in \Y} f\left( \hat x^N,y\right) -  \min_{x\in \X} f\left(x, \hat y^N \right)
     &\leq \frac{ \D^2+ (c  d M_2^2 a^2_q +
     d^2 \Delta^2\tau^{-2}a_q^2)
    \sum_{k=1}^{N} {\gamma_k^2}/{2}}{\sum_{k=1}^{N}\gamma_k}
  \notag \\
  &+{\sqrt{d}\Delta \D}{\tau^{-1}}
     +  2\tau M_2.  
 \end{align*}
Taking the expectation of this and choosing  stepsize $\gamma_k 
=\frac{\D}{M}\sqrt{\frac{2}{N}}$ with $M^2 \triangleq cdM_2^2 a_q^2 + d^2\Delta^2\tau^{-2}a_q^2$  we get
 \begin{align}\label{eq:convergencerate0}
      \E\left[ \max_{y \in \Y} f\left( \hat x^N,y\right) -  \min_{x\in \X} f\left(x, \hat y^N \right)\right]
     &\leq  
     M\D\sqrt{{2}/{N}}
     + {\sqrt{d}\Delta\D}{\tau^{-1}} 
     +  2\tau M_2.  
 \end{align}
\end{proof}


\section{Zeroth-order algorithm with restarts}\label{sec:rest}
In this section, we assume that we  additionally have the $r-$growth condition for duality gap (see, \cite{shapiro2021lectures} for convex optimization problems).  For such a case, we apply the restart technique \cite{juditsky2014deterministic} to  Algorithm \ref{alg:zoSPA} 
\begin{assumption}[$r-$growth condition]\label{ass:strongly_convex}
There is $r \geq 1$ and $\mu_{r} > 0$ such that for all $z = (x,y) \in \Z$
\[\frac{\mu_{r}}{2} \|z-z^{\star}\|_p^{r}
\leq f\left(  x,y^{\star}\right) -  f\left(x^{\star},  y \right),
\]
where $(x^*,y^*)$ is a solution of problem \eqref{eq:min_max_problem}.
\end{assumption}

The next theorem states that if additionally Assumption \ref{ass:strongly_convex} holds, then the convergence results of Theorem \ref{cor:bound_noise} can be improved.
\begin{theorem}\label{th:conv_rate_restarts} 
Let  $\e$ be the desired accuracy to solve problem \eqref{eq:min_max_problem} and  $\tau$  from randomized smoothing \eqref{eq:smoothedfunc} be chosen as $\tau = \O\left(\e/M_2\right)$. Let function $f(x,y,\xi)$  satisfy the Assumption \ref{ass:Lip_obj} and Assumption \ref{ass:strongly_convex} with $r \geq 2$. 
Let   one of the two following statement is true
\begin{enumerate}
    \item Assumption \ref{ass:err_noise} holds with $\Delta \lesssim \frac{\mu_{r}^{1/r}\e^{2-1/r}}{M_2\sqrt{d}}$;
    \item  Assumption \ref{ass:Lip_noise} holds with
$M_{2,\delta} \lesssim \frac{\mu_{r}^{1/r}\e^{1-1/r}}{\sqrt{d}}$.
\end{enumerate}
Then for  $ \hat \e_{\rm sad}  \triangleq   f(\hat x^N, y^{\star}) -    f(x^{\star}, \hat y^N)$, where  $ \hat z^N \triangleq (\hat x^N, \hat y^N)$ is the output of   Algorithm \ref{alg:zoSPA} with restarts, it holds   $\E\left[\hat \e_{\rm sad}(\hat z^N)\right]\leq \e$  after the following number of iterations
 \begin{align}\label{eq:estim_iter_restart}
     N = \widetilde \O\left(\frac{d M_2^2 a_q^2  d}{\mu_{r}^{2/r} \e^{2(r-1)/r}} \right)
 \end{align}
\end{theorem}

\subsection{
Convergence rate in high-probability bound}
Heretofore, all the results were stated in the average, now we provide the convergence results in rems of probability. To do so, we need the following assumption.

\begin{assumption}[Uniformly Lischitz continuity of the objective]\label{ass:uni_Lip_obj}
 Function $f(z,\xi)$ is uniformly $M_2$-Lipschitz continuous in $z\in \Z$ w.r.t. the $\ell_2$-norm, i.e.,   for all  $z_1,z_2\in \Z$ and $\xi \in \Xi$, 
 \[|f(z_1, \xi)- f(z_2,\xi)| \leq M_2 \|z_1- z_2\|_2.\] 
\end{assumption}

The next theorem is stated in the Euclidean proximal setup ($p=q=2, a_2=1$).
\begin{theorem}\label{th:conv_rate_restarts_prob} 
 Let $\e$ be the desired accuracy to solve problem \eqref{eq:min_max_problem} and $\tau$  be chosen as $\tau = \O\left(\e/M_2\right)$.
Let the Assumption \ref{ass:err_noise} holds   with $\Delta \lesssim \frac{\mu_{r}^{1/r}\e^{2-1/r}}{M_2\sqrt{d}}$ and let function $f(x,y,\xi)$  satisfy  Assumption  \ref{ass:uni_Lip_obj}.  Then for the output $ \hat z^N \triangleq (\hat x^N, \hat y^N)$ of   Algorithm \ref{alg:zoSPA},   it holds   $\Prob\left\{\hat{\e}_{\rm sad}(\hat z^N) \leq \e \right\} \geq 1 - \sigma$ after 
\[N =  \O\left( {d M_2^2\D^2}/{\e^{2}} \right)\]
iterations. 
Moreover if Assumption \ref{ass:strongly_convex} is satisfied with $r \geq 1$,  then for the output $ \hat z^N \triangleq (\hat x^N, \hat y^N)$ of   Algorithm \ref{alg:zoSPA} with restarts,   it holds    $\Prob\left\{\hat{\e}_{\rm sad}(\hat z^N) \leq \e \right\} \geq 1 - \sigma$ after the following number of iterations
 \begin{align}\label{eq:estim_iter_restart_prob}
     N = \widetilde \O\left(\frac{dM_2^2 }{\mu_{r}^{2/r} \e^{2(r-1)/r}} \right).
 \end{align}
\end{theorem}

\begin{remark}\label{hpb}
In the case $r=2$ it is probably possible to improve the bound \eqref{eq:estim_iter_restart_prob} in $\log \e^{-1}$ factor by using alternative algorithm \cite{harvey2019tight}. 
\end{remark}

\section{Infinite noise variance}\label{sec:inf_noise}
Now we comment on the case when 
 the second moment of the  stochastic subgradient  $\nabla f(z,\xi)$ is unbounded. In this case the rate of convergence may changes dramatically. For such a case, we modify the Assumptions~\ref{ass:Lip_obj}
 \begin{assumption}[Lipschitz continuity of the objective under infinite noise variance]\label{ass:Lip_objInf}
 Function $f(z,\xi)$ is $M_2$-Lipschitz continuous in $z\in \Z$ w.r.t. the $\ell_2$-norm, i.e.,   for all  $z_1,z_2\in \Z$ and $\xi \in \Xi$, 
 \[|f(z_1, \xi)- f(z_2,\xi)| \leq M_2(\xi) \|z_1- z_2\|_2.\] 
 Moreover, there exists a positive constant $\tilde{M_2}$ such that  $  \E \left[M_2(\xi)^{1+\kappa}\right]\le \tilde{M}_2^{1+\kappa}$, where $\kappa \in \left(0,1\right]$.
\end{assumption}
If Assumption \ref{ass:Lip_objInf} holds, the stepsize in Algorithm \ref{alg:zoSPA} is replaced by 
\[\gamma_k = \frac{\left((1+\kappa)V_{z^1}(z^{\star})/\kappa\right)^{\frac{1}{1+\kappa}}}{\tilde{M}}N^{-\frac{1}{1+\kappa}},\]
where 
$V_{z^1}(z^{\star})$ is the Bregman divergence determined by the following prox-function with 
 $q \in \left[1+\kappa,\infty\right)$ and $1/p + 1/q = 1$
\[\omega(x) = K_q^{1/\kappa} \frac{\kappa}{1+\kappa}\|x\|_p^{\frac{1+\kappa}{\kappa}} \mbox{ where } K_q = 10\max\left\{1,(q-1)^{(1+\kappa)/2}\right\}.  \]
and constant $\tilde{M}$ is determined as (can be obtained from \citet[Lemmas 9 -- 11]{shamir2017optimal}) 
 \begin{enumerate}[leftmargin=*]
     \item  under Assumption  \ref{ass:err_noise} $
        \E\left[\|g(z,\xi,\be)\|^{1+\kappa}_q\right] \leq \tilde{c} a^2_q d^{(1+\kappa)/2}\tilde{M}_2^{1+\kappa} + 2^{1+\kappa}{d^{1+\kappa} a_q^2\Delta^2}{\tau^{-2}}=\tilde{M}^{1+\kappa}, $
     \item  under Assumption \ref{ass:Lip_noise} 
 $
    \E\left[\|g(z,\xi,\be)\|^{1+\kappa}_q\right] \leq \tilde{c} a^2_q d^{(1+\kappa)/2} (\tilde{M}_2^{1+\kappa}+M_{2,\delta}^{1+\kappa}) = \tilde{M}^{1+\kappa},$
 \end{enumerate}
 where    $\tilde{c}$ is some numerical constant and
 $\sqrt{\E_{\be} \left[ \|\be\|_q^{2+2\kappa}\right]} \leq \tilde{a}_q^2$. As a particular case: $\tilde{a}_{2}^2 = 1$, $\tilde{a}_{\infty}^2 = \O\left({\frac{(\log d)^{(1+\kappa)/2}}{d^{(1+\kappa)/2}}}\right)$.

Based on \cite{vural2022mirror} one can prove 
that convergence rate of Algorithm \ref{alg:zoSPA} changes dramatically
in comparison with  Theorem~\ref{th:conv_rate_restarts} (see \eqref{eq:convergencerate0}), namely under Assumption  \ref{ass:err_noise}
\[
      \E\left[ \max_{y \in \Y} f\left( \hat x^N,y\right) -  \min_{x\in \X} f\left(x, \hat y^N \right)\right]
     \leq  
    \tilde{M}\left(\frac{1+\kappa}{\kappa}V_{z^1}(z^{\star})\right)^{\frac{\kappa}{1+\kappa}} N^{-\frac{\kappa}{1+\kappa}}
     + \frac{\Delta\D\sqrt{d}}{\tau} 
     +  2\tau \tilde{M_2}.  
\]
These results can be further generalized to $r-$growth condition for duality gap ($r\ge 2$).

\section{`Upper' bound in the case of Lipschitz noise}\label{sec:upper_bound}
Now let us consider a stochastic convex optimization problem of the form
\begin{equation}\label{eq:conv_prob}
    \min_{x \in \X} F(x) \triangleq \E_{\xi} f(x,\xi),
\end{equation}
where  $X \subseteq \R^{d}$ is a convex set, and for all $\xi$, $f(x,\xi)$ is convex in $x\in\X$  and satisfies Assumption \ref{ass:Lip_obj}. The empirical counterpart of this problem \eqref{eq:conv_prob} is
\begin{equation}\label{emp_prob}
    \min_{x \in \X} \hat{F}(x)\triangleq \frac{1}{N}\displaystyle\sum_{k = 1}^{N} f(x, \xi^k).
\end{equation}
The exact solution ($\nicefrac{\e}{2}$-solution)  of \eqref{emp_prob} is an $\e$-solution of \eqref{eq:conv_prob} if the sample size $N$ is taken as follows  \cite{shapiro2005complexity,feldman2016generalization}
\begin{equation}\label{LB}
   N=\widetilde{ \Omega}\left(d {M_2^2 \D_2^2}/{\e^{2}}\right),
\end{equation}
where $\D_2$ is the diameter of $X$ in the $\ell_2$-norm. This lower bound is tight \cite{shapiro2005complexity,shapiro2021lectures}. On the other hand,  $\hat{F}(x)$ from \eqref{emp_prob} can be considered as an inexact zeroth-order oracle for $F(x)$ from \eqref{eq:conv_prob}. If $\delta(x) = F(x) - \hat{F}(x)$ then  in $\text{Poly}\left(d,1/\e\right)$ points $y,x$ with probability $1 - \beta$ the following holds 
$$\left|\delta(y) - \delta(x)\right| = \O\left(\frac{M_2\|y-x\|_2}{\sqrt{N}}\ln\left(\frac{\text{Poly}\left(d,1/\e\right)}{\beta}\right)\right)= \widetilde{\O}\left(\frac{M_2\|y-x\|_2}{\sqrt{N}}\right),$$
i.e., $\delta(x)$ is a Lipschitz function with Lipschitz constant 
\begin{equation}\label{delta}
    {M_{2,\delta}} = \widetilde{\O}\left({M_2}/{\sqrt{N}}\right).
\end{equation}

Let us assume there exists a zeroth-order algorithm that can solve \eqref{eq:conv_prob} with accuracy  $\e$ in $\text{Poly}\left(d,1/\e\right)$ oracle calls, where an oracle returns an inexact value of $F(x)$ with a noise that has the following Lipschitz constant 
$$M_{2,\delta}  \gg \frac{\e}{\D_2\sqrt{d}}.$$ 
We can use this algorithm to solve problem \eqref{eq:conv_prob} with $N$ determines from (see \eqref{delta})
$$\frac{M_2}{\sqrt{N}} \gg \frac{\e}{\D_2\sqrt{d}}, \qquad \mbox{i.e.} \qquad N \ll d{M_2^2 \D_2^2}/{\e^2},$$
that contradicts  the lower bound \eqref{LB}. Thereby it is impossible in general to solve with accuracy $\e$ (in function value) Lipschitz convex  optimization problem via $\text{Poly}\left(d,1/\e\right)$ inexact zero-order oracle calls if Lipschitz constant of noise is greater than  
\begin{equation}\label{LBN}
   {\e}/({\D_2\sqrt{d}}).
\end{equation}

Unfortunately, we obtain this upper bound assuming that $\text{Poly}\left(d,1/\e\right)$ points were chosen regardless of $\left\{\xi^k\right\}_{k=1}^N$. That is not the case for the most of practical algorithms, in particular, considered above. But the dependence of these points from $\left\{\xi^k\right\}_{k=1}^N$ is significantly weakened by randomization we use in zero-order methods. So we may expect that nevertheless this upper bound still takes place. 

For arbitrary $\text{Poly}\left(d,1/\e\right)$ points (possibly that could depend on $\left\{\xi^k\right\}_{k=1}^N$) we can guarantee only (see \cite{shapiro2021lectures})
\begin{equation}\label{delta_}
    M_{2,\delta}  = \widetilde{\O}\left(\sqrt{d}{M_2}/{\sqrt{N}}\right).
\end{equation}
That is large than \eqref{delta}. Consequently, the upper bound \eqref{LBN} should be 
rewritten as
\begin{equation}\label{LBN_}
    {\e}/{\D_2}.
\end{equation}

We believe that \eqref{LBN_} is not tight upper bound, rather than \eqref{LBN}. That is, there exists algorithm (see Algorithm \ref{alg:zoSPA}) that can solve \eqref{eq:conv_prob} with accuracy $\e$ (in function value) via $\simeq dM_2^2\D_2^2/\e^2$ inexact zero-order oracle calls if Lipschitz constant of noise is bounded from above by $M_{2,\delta}  \lesssim \e/(\D_2\sqrt{d})$. But there are no algorithms reaching the same accuracy $\e$ by using $\text{Poly}\left(d,1/\e\right)$ inexact zero-order oracle calls if Lipschitz constant of noise is bounded from above by $M_{2,\delta}  \gg \e/(\D_2\sqrt{d})$, in particular, for $M_{2,\delta} $ given by \eqref{LBN_}. Note, that \eqref{LB} holds  also if $f(x,\xi)$ has Lipschitz $x$-gradient \cite{feldman2016generalization}. Hence we can expect that obtained  lower bounds take place also for smooth problems.






\section*{Conclusion}
In this paper, we demonstrate how to solve  non-smooth stochastic convex-concave saddle point problems  with two-point gradient-free oracle. In the Euclidean proximal setup, we obtain  optimal oracle complexity bound   $\O(d/\e^2)$.    We also generalize this result for an arbitrary proximal setup and obtain a tight upper bound  $\O(\e^2/\sqrt{d})$ on maximal level of additive adversary noise in oracle calls. We generalize this result for the class of saddle point problems satisfying $r$-growth condition for duality gap.

\section*{Acknowledgments}
This work was supported by a grant for research centers in the field of 
artificial intelligence, provided by the Analytical Center for the 
Government of the Russian Federation in accordance with the subsidy 
agreement (agreement identifier 000000D730321P5Q0002 ) and the agreement 
with the Ivannikov Institute for System Programming of the Russian 
Academy of Sciences dated November 2, 2021 No. 70-2021-00142.

{\small

\bibliographystyle{apalike}
\bibliography{ref}

\begin{thebibliography}{}

\bibitem[Akhavan et~al., 2020]{akhavan2020exploiting}
Akhavan, A., Pontil, M., and Tsybakov, A. (2020).
\newblock Exploiting higher order smoothness in derivative-free optimization
  and continuous bandits.
\newblock {\em Advances in Neural Information Processing Systems},
  33:9017--9027.

\bibitem[Bartlett et~al., 2008]{bartlett2008high}
Bartlett, P., Dani, V., Hayes, T., Kakade, S., Rakhlin, A., and Tewari, A.
  (2008).
\newblock High-probability regret bounds for bandit online linear optimization.
\newblock In {\em Proceedings of the 21st Annual Conference on Learning
  Theory-COLT 2008}, pages 335--342. Omnipress.

\bibitem[Bayandina et~al., 2018]{bayandina2018gradient}
Bayandina, A.~S., Gasnikov, A.~V., and Lagunovskaya, A.~A. (2018).
\newblock Gradient-free two-point methods for solving stochastic nonsmooth
  convex optimization problems with small non-random noises.
\newblock {\em Automation and Remote Control}, 79(8):1399--1408.

\bibitem[Ben-Tal and Nemirovski, 2013]{ben2001lectures}
Ben-Tal, A. and Nemirovski, A. (2013).
\newblock {\em Lectures on modern convex optimization: analysis, algorithms,
  and engineering applications}.
\newblock SIAM.

\bibitem[Berahas et~al., 2021]{berahas2021theoretical}
Berahas, A.~S., Cao, L., Choromanski, K., and Scheinberg, K. (2021).
\newblock A theoretical and empirical comparison of gradient approximations in
  derivative-free optimization.
\newblock {\em Foundations of Computational Mathematics}, pages 1--54.

\bibitem[Beznosikov et~al., 2020]{beznosikov2020gradient}
Beznosikov, A., Sadiev, A., and Gasnikov, A. (2020).
\newblock Gradient-free methods with inexact oracle for convex-concave
  stochastic saddle-point problem.
\newblock In {\em International Conference on Mathematical Optimization Theory
  and Operations Research}, pages 105--119. Springer.

\bibitem[Bubeck and Cesa-Bianchi, 2012]{bubeck2012regret}
Bubeck, S. and Cesa-Bianchi, N. (2012).
\newblock Regret analysis of stochastic and nonstochastic multi-armed bandit
  problems.
\newblock {\em Foundations and Trends® in Machine Learning}, 5(1):1--122.

\bibitem[Chen et~al., 2017]{chen2017zoo}
Chen, P.-Y., Zhang, H., Sharma, Y., Yi, J., and Hsieh, C.-J. (2017).
\newblock Zoo: Zeroth order optimization based black-box attacks to deep neural
  networks without training substitute models.
\newblock In {\em Proceedings of the 10th ACM workshop on artificial
  intelligence and security}, pages 15--26.

\bibitem[Choromanski et~al., 2018]{choromanski2018structured}
Choromanski, K., Rowland, M., Sindhwani, V., Turner, R., and Weller, A. (2018).
\newblock Structured evolution with compact architectures for scalable policy
  optimization.
\newblock In {\em International Conference on Machine Learning}, pages
  970--978. PMLR.

\bibitem[Cohen et~al., 2018]{cohen2018acceleration}
Cohen, M.~B., Diakonikolas, J., and Orecchia, L. (2018).
\newblock On acceleration with noise-corrupted gradients.
\newblock {\em arXiv preprint arXiv:1805.12591}.

\bibitem[Conn et~al., 2009]{conn2009introduction}
Conn, A., Scheinberg, K., and Vicente, L. (2009).
\newblock {\em Introduction to Derivative-Free Optimization}.
\newblock Society for Industrial and Applied Mathematics.

\bibitem[d'Aspremont, 2008]{d2008smooth}
d'Aspremont, A. (2008).
\newblock Smooth optimization with approximate gradient.
\newblock {\em SIAM Journal on Optimization}, 19(3):1171--1183.

\bibitem[Duchi et~al., 2012]{duchi2012randomized}
Duchi, J.~C., Bartlett, P.~L., and Wainwright, M.~J. (2012).
\newblock Randomized smoothing for stochastic optimization.
\newblock {\em SIAM Journal on Optimization}, 22(2):674--701.

\bibitem[Duchi et~al., 2015]{duchi2015optimal}
Duchi, J.~C., Jordan, M.~I., Wainwright, M.~J., and Wibisono, A. (2015).
\newblock Optimal rates for zero-order convex optimization: The power of two
  function evaluations.
\newblock {\em {IEEE} Trans. Information Theory}, 61(5):2788--2806.

\bibitem[Dvinskikh and Gasnikov, 2021]{dvinskikh2021decentralized}
Dvinskikh, D. and Gasnikov, A. (2021).
\newblock Decentralized and parallel primal and dual accelerated methods for
  stochastic convex programming problems.
\newblock {\em Journal of Inverse and Ill-posed Problems}.

\bibitem[Dvinskikh et~al., 2020]{dvinskikh2020accelerated}
Dvinskikh, D.~M., Turin, A.~I., Gasnikov, A.~V., and Omelchenko, S.~S. (2020).
\newblock Accelerated and non accelerated stochastic gradient descent in model
  generality.
\newblock {\em Matematicheskie Zametki}, 108(4):515--528.

\bibitem[Feldman, 2016]{feldman2016generalization}
Feldman, V. (2016).
\newblock Generalization of erm in stochastic convex optimization: The
  dimension strikes back.
\newblock {\em Advances in Neural Information Processing Systems},
  29:3576--3584.

\bibitem[Flaxman et~al., 2004]{flaxman2004online}
Flaxman, A.~D., Kalai, A.~T., and McMahan, H.~B. (2004).
\newblock Online convex optimization in the bandit setting: gradient descent
  without a gradient.
\newblock {\em arXiv preprint cs/0408007}.

\bibitem[Gasnikov et~al., 2022]{gasnikov2022power}
Gasnikov, A., Novitskii, A., Novitskii, V., Abdukhakimov, F., Kamzolov, D.,
  Beznosikov, A., Takac, M., Dvurechensky, P., and Gu, B. (2022).
\newblock The power of first-order smooth optimization for black-box non-smooth
  problems.
\newblock {\em arXiv preprint arXiv:2201.12289}.

\bibitem[Gasnikov et~al., 2017]{gasnikov2017stochastic}
Gasnikov, A.~V., Krymova, E.~A., Lagunovskaya, A.~A., Usmanova, I.~N., and
  Fedorenko, F.~A. (2017).
\newblock Stochastic online optimization. single-point and multi-point
  non-linear multi-armed bandits. convex and strongly-convex case.
\newblock {\em Automation and Remote Control}, 78(2):224--234.
\newblock arXiv:1509.01679.

\bibitem[Gasnikov and Nesterov, 2018]{gasnikov2018universal}
Gasnikov, A.~V. and Nesterov, Y.~E. (2018).
\newblock Universal method for stochastic composite optimization problems.
\newblock {\em Computational Mathematics and Mathematical Physics},
  58(1):48--64.

\bibitem[Goodfellow et~al., 2014]{goodfellow2014generative}
Goodfellow, I., Pouget-Abadie, J., Mirza, M., Xu, B., Warde-Farley, D., Ozair,
  S., Courville, A., and Bengio, Y. (2014).
\newblock Generative adversarial nets.
\newblock {\em Advances in neural information processing systems}, 27.

\bibitem[Gorbunov et~al., 2021]{gorbunov2021gradient}
Gorbunov, E., Danilova, M., Shibaev, I.~A., Dvurechensky, P., and Gasnikov, A.
  (2021).
\newblock Gradient clipping helps in non-smooth stochastic optimization with
  heavy-tailed noise.

\bibitem[Gorbunov et~al., 2019a]{gorbunov2019optimal}
Gorbunov, E., Dvinskikh, D., and Gasnikov, A. (2019a).
\newblock Optimal decentralized distributed algorithms for stochastic convex
  optimization.
\newblock {\em arXiv preprint arXiv:1911.07363}.

\bibitem[Gorbunov et~al., 2019b]{gorbunov2019upper}
Gorbunov, E., Vorontsova, E.~A., and Gasnikov, A.~V. (2019b).
\newblock On the upper bound for the expectation of the norm of a vector
  uniformly distributed on the sphere and the phenomenon of concentration of
  uniform measure on the sphere.
\newblock {\em Mathematical Notes}, 106.

\bibitem[Granichin and Polyak, 2003]{granichin2003randomized}
Granichin, O. and Polyak, B. (2003).
\newblock Randomized algorithms of an estimation and optimization under almost
  arbitrary noises.
\newblock {\em M.: Nauka}.

\bibitem[Harvey et~al., 2019]{harvey2019tight}
Harvey, N.~J., Liaw, C., Plan, Y., and Randhawa, S. (2019).
\newblock Tight analyses for non-smooth stochastic gradient descent.
\newblock In {\em Conference on Learning Theory}, pages 1579--1613. PMLR.

\bibitem[Juditsky and Nemirovski, 2012]{juditsky2012first-order}
Juditsky, A. and Nemirovski, A. (2012).
\newblock First order methods for non-smooth convex large-scale optimization,
  i: General purpose methods.
\newblock In Suvrit~Sra, Sebastian~Nowozin, S.~W., editor, {\em Optimization
  for Machine Learning}, pages 121--184. Cambridge, MA: MIT Press.

\bibitem[Juditsky et~al., 2011]{juditsky2011solving}
Juditsky, A., Nemirovski, A., and Tauvel, C. (2011).
\newblock Solving variational inequalities with stochastic mirror-prox
  algorithm.
\newblock {\em Stochastic Systems}, 1(1):17--58.

\bibitem[Juditsky and Nesterov, 2014]{juditsky2014deterministic}
Juditsky, A. and Nesterov, Y. (2014).
\newblock Deterministic and stochastic primal-dual subgradient algorithms for
  uniformly convex minimization.
\newblock {\em Stochastic Systems}, 4(1):44--80.

\bibitem[Lan et~al., 2012]{lan2012validation}
Lan, G., Nemirovski, A., and Shapiro, A. (2012).
\newblock Validation analysis of mirror descent stochastic approximation
  method.
\newblock {\em Mathematical programming}, 134(2):425--458.

\bibitem[Laurent and Massart, 2000]{laurent2000adaptive}
Laurent, B. and Massart, P. (2000).
\newblock Adaptive estimation of a quadratic functional by model selection.
\newblock {\em Annals of Statistics}, pages 1302--1338.

\bibitem[Lifshits, 2012]{lifshits2012lectures}
Lifshits, M. (2012).
\newblock Lectures on gaussian processes.
\newblock In {\em Lectures on Gaussian Processes}, pages 1--117. Springer.

\bibitem[Mania et~al., 2018]{mania2018simple}
Mania, H., Guy, A., and Recht, B. (2018).
\newblock Simple random search of static linear policies is competitive for
  reinforcement learning.
\newblock In {\em Proceedings of the 32nd International Conference on Neural
  Information Processing Systems}, pages 1805--1814.

\bibitem[Nemirovskij and Yudin, 1983]{nemirovskij1983problem}
Nemirovskij, A.~S. and Yudin, D.~B. (1983).
\newblock Problem complexity and method efficiency in optimization.

\bibitem[Nesterov and Spokoiny, 2017]{nesterov2017random}
Nesterov, Y. and Spokoiny, V. (2017).
\newblock Random gradient-free minimization of convex functions.
\newblock {\em Found. Comput. Math.}, 17(2):527--566.
\newblock First appeared in 2011 as CORE discussion paper 2011/16.

\bibitem[Neumann, 1928]{neumann1928theorie}
Neumann, J.~v. (1928).
\newblock Zur theorie der gesellschaftsspiele.
\newblock {\em Mathematische annalen}, 100(1):295--320.

\bibitem[Polyak, 1987]{polyak1987introduction}
Polyak, B. (1987).
\newblock {\em Introduction to Optimization}.
\newblock New York, Optimization Software.

\bibitem[Risteski and Li, 2016]{risteski2016algorithms}
Risteski, A. and Li, Y. (2016).
\newblock Algorithms and matching lower bounds for approximately-convex
  optimization.
\newblock {\em Advances in Neural Information Processing Systems},
  29:4745--4753.

\bibitem[Shamir, 2017]{shamir2017optimal}
Shamir, O. (2017).
\newblock An optimal algorithm for bandit and zero-order convex optimization
  with two-point feedback.
\newblock {\em Journal of Machine Learning Research}, 18:52:1--52:11.
\newblock First appeared in arXiv:1507.08752.

\bibitem[Shapiro et~al., 2021]{shapiro2021lectures}
Shapiro, A., Dentcheva, D., and Ruszczynski, A. (2021).
\newblock {\em Lectures on stochastic programming: modeling and theory}.
\newblock SIAM.

\bibitem[Shapiro and Nemirovski, 2005]{shapiro2005complexity}
Shapiro, A. and Nemirovski, A. (2005).
\newblock On complexity of stochastic programming problems.
\newblock In {\em Continuous optimization}, pages 111--146. Springer.

\bibitem[Spall, 2003]{spall2003introduction}
Spall, J.~C. (2003).
\newblock {\em Introduction to Stochastic Search and Optimization}.
\newblock John Wiley \& Sons, Inc., New York, NY, USA, 1 edition.

\bibitem[Stonyakin et~al., 2021]{stonyakin2021inexact}
Stonyakin, F., Tyurin, A., Gasnikov, A., Dvurechensky, P., Agafonov, A.,
  Dvinskikh, D., Alkousa, M., Pasechnyuk, D., Artamonov, S., and Piskunova, V.
  (2021).
\newblock Inexact model: A framework for optimization and variational
  inequalities.
\newblock {\em Optimization Methods and Software}, pages 1--47.

\bibitem[Vasin et~al., 2021]{vasin2021stopping}
Vasin, A., Gasnikov, A., and Spokoiny, V. (2021).
\newblock Stopping rules for accelerated gradient methods with additive noise
  in gradient.

\bibitem[Vural et~al., 2022]{vural2022mirror}
Vural, N.~M., Yu, L., Balasubramanian, K., Volgushev, S., and Erdogdu, M.~A.
  (2022).
\newblock Mirror descent strikes again: Optimal stochastic convex optimization
  under infinite noise variance.

\end{thebibliography}
}

\newpage
\appendix


\section{The Basic Idea and Possible Generalization}
In this section, we give some possible generalizations
for our results. For simplicity, we consider a non-stochastic non-smooth convex optimization problem in the Euclidean proximal setup
\begin{equation}\label{prblm}
    \min_{x\in \X\subseteq \mathbb{R}^d} f(x),
\end{equation}
where $X \subseteq \R^{d}$ is a convex set and $f$ is  $M$-Lipschitz continuous.
In this problem, we replace the non-smooth objective $f(x)$ by its smooth  approximation $f^{\tau}(x)$ defined in  \eqref{eq:smoothedfunc}, i.e.:
$
f^{\tau}(x)\triangleq  \mathbb{E}_{\tilde{e}} f(x + \tau \tilde{e})
$, where  $\tau > 0$ is some constant and $\tilde{e}$ is a  vector 
picked uniformly at random from the Euclidean unit ball $\{\tilde{e}:\|\tilde{e}\|_2\leq 1\}$.
From \cite{duchi2015optimal} it follows that
\begin{equation}\label{apprx}
f(x) \leq f^{\tau}(x) \leq f(x) + \tau M.
\end{equation}
Let the objective $f(x)$    can be not directly observed but instead, its noisy approximation $\vp(x) \triangleq f(x) + \delta(x)$ can be queried, where $\delta(x)$ is some adversarial noise such that $\|\delta(x)\| \leq \Delta$.
Similarly to  \eqref{eq:grad_approx} we can estimate the gradient of $\vp(x)$ d by the following gradient-free approximation 
\begin{equation*}\label{sg}
  g(x,e)    = \frac{d}{2\tau}\left(\vp(x+\tau e) - \vp(x-\tau e)\right)e,
\end{equation*}
 Due to 
\cite{gasnikov2017stochastic} (see also Lemma~\ref{lm:dotprodvece}) for all $r \in \R^d$
\begin{equation}\label{unbias}
 \mathbb{E}_e  \left[ \langle g(x, e)  - \nabla f^{\tau}(x), r\rangle \right]\lesssim \sqrt{d}{\Delta\|r\|_2}{\tau^{-1}}
 \end{equation}
 and \cite{shamir2017optimal,beznosikov2020gradient}
 \begin{equation}\label{var}
   \mathbb{E}_e \left[ \| g(x, e) - \mathbb{E}_e \nabla f^{\tau}(x, e) \|^2_2 \right]\simeq \mathbb{E}_e \left[ \| g(x, e) \|^2_2 \right] \lesssim 
   d M^2 + {d^2\Delta^2}{\tau^{-2}} ,
    \end{equation}
where $e$ is a random vector uniformly distributed on    the Euclidean unit sphere $\{e:\|e\|_2= 1\}$.  Bound  \eqref{unbias} is $\sqrt{d}$ better than in the bounds from \cite{beznosikov2020gradient,akhavan2020exploiting}. Moreover, for $M_{2,\delta}$-Lipschitz noise, Eq. \eqref{unbias} can be rewritten as follows
\begin{equation}\label{unbias_2}
 \mathbb{E}_e \left[ \langle  g(x, e)  - \nabla f^{\tau}(x), r\rangle \right] \lesssim \sqrt{d}M_{2,\delta}\|r\|_2.
 \end{equation}
We will say that an algorithm $\bf A$ (with $g(x, e)$ oracle) is \textit{robust} for  $f^{\tau}$ if the bias in the l.h.s. of \eqref{unbias} does not accumulate over method iterations. That is, if for $\bf A$ with $\Delta = 0$ 
$$\mathbb{E}f^{\tau}(x^N) - \min_{x\in \X} f^{\tau}(x)\le \Theta_A(N),$$
then with $\Delta > 0$ and (variance control)  ${d^2\Delta^2}{\tau^{-2}} \lesssim d M^2$, see \eqref{var}:  
\begin{equation}\label{ns}
    \mathbb{E} f^{\tau}(x^N) - \min_{x\in \X} f^{\tau}(x)= O\left(\Theta_A(N) + \sqrt{d}{\Delta\D}{\tau^{-1}}\right),
\end{equation}
 where $\D$ is a diameter of $\X$. Here  $N$ should be taken such that the first term of the r.h.s. of Eq. \eqref{ns}  is not smaller than the second one. Similar definition can be made for \eqref{unbias_2}. 
 Many known methods are robust, e.g., stochastic versions of mirror descent, mirror prox, gradient method and fast gradient  method are robust
\cite{juditsky2012first-order,d2008smooth,cohen2018acceleration,dvinskikh2020accelerated,dvinskikh2021decentralized,gorbunov2019optimal}. \\
Now we explain how to obtain the bound on the level of noise $\Delta$.

\textbf{Approximation.}
To approximate non-smooth function $f(x)$ by smooth function $f^{\tau}(x)$, constant $\tau$ should be taken as follows: $\tau = \frac{\e}{2M}$ (see \eqref{apprx}).

\textbf{Variance control.} To control the variance and obtain 
the second moment of the stochastic gradient with $\Delta>0$ and $\Delta=0$ of the same order, $\Delta$ should be taken not bigger that (see \eqref{var}):
$\Delta\lesssim\frac{\tau M}{\sqrt{d}}.$

\textbf{Bias.} From \eqref{ns} we will have more restrictive condition on the level of noise:
$\Delta\lesssim\frac{\tau\e}{\D\sqrt{d}}.$
Combining  the bias condition and approximation condition leads to the following bound on the $\Delta$:
\begin{equation}\label{lon}
\Delta \lesssim\frac{\e^2}{\D M\sqrt{d}}.
\end{equation}
For Lipschitz noise and for saddle point problems, the same bound holds by the same reasoning.

More interestingly, as stochastic (batched) versions of fast gradient method and mirror prox are also robust \cite{gorbunov2019optimal,stonyakin2021inexact} and $f^{\tau}$ has $\nicefrac{\sqrt{d}M}{\tau}$-Lipschitz gradient \cite{duchi2012randomized,gasnikov2022power}, we can improve the results of this  paper by using parallelized smoothing technique from \cite{gasnikov2022power}. For instance, for non-smooth convex optimization problem \eqref{prblm}, the gradient-free method from \cite{gasnikov2022power} (with optimal number of oracle calls and the best known number of consistent iterations) provides also the highest possible level of noise, given by \eqref{lon}.

\section{Proofs of auxiliary lemmas}\label{sec:aux_lemmas}

\textit{Proof of Lemma \ref{lm:shamirEnabla}. }
 Let us consider
\begin{align}\label{eq:sqnormE}
    &\E_{\xi,\be} \left[\|g(z,\xi,\be)\|^2_q \right] = \E_{\xi,\be}\left[ \left\|\frac{d}{2\tau}\left(\vp(z+\tau \be,\xi) - \vp(z-\tau \be,\xi)   \right) \be\right\|^2_q \right] \notag \\
    &= \frac{d^2}{4\tau^2}\E_{\xi,\be} \left[ \|\be\|_q^2 \left(f(z+\tau \be,\xi) +\delta(z+\tau\be) - f(z-\tau \be,\xi) -\delta(z-\tau\be)  \right)^2 \right] \notag\\
     &\leq \frac{d^2}{2\tau^2}\left(\E_{\xi,\be} \left[\|\be\|_q^2 \left(f(z+\tau \be,\xi) - f(z-\tau \be,\xi)\right)^2\right] + \E_{\be} \left[ \|\be\|_q^2 \left(\delta(z+\tau\be)  -\delta(z-\tau\be)  \right)^2 \right] \right),
\end{align}
where we used that for all $ a,b, (a-b)^2\leq 2a^2+2b^2$.
For the first term in the r.h.s. of \eqref{eq:sqnormE}, the following holds with an arbitrary parameter $\alpha$
\begin{align}\label{eq:fgkbbhgdddgh}
 & \frac{d^2}{2\tau^2}\E_{\xi,\be} \left[\|\be\|_q^2 \left(f(z+\tau \be,\xi) - f(z-\tau \be,\xi)\right)^2\right]  \notag\\
  &= \frac{d^2}{2\tau^2}\E_{\xi,\be} \left[ \|\be\|_q^2 \left((f(z+\tau \be,\xi) -\alpha) -(f(z-\tau \be,\xi) - \alpha)   \right)^2 \right] \notag\\
    &\leq \frac{d^2}{\tau^2}\E_{\xi,\be} \left[ \|\be\|_q^2 (f(z+\tau \be,\xi)-\alpha)^2 + (f(z-\tau \be,\xi) -\alpha)^2 \right] \qquad /* \forall a,b, (a-b)^2\leq 2a^2+2b^2 */ \notag\\
     &=  \frac{d^2}{\tau^2}\left(\E_{\xi,\be} \left[ \|\be\|_q^2 (f(z+\tau \be,\xi)-\alpha)^2 \right] + \E_{\xi,\be} \left[\|\be\|_q^2(f(z-\tau \be,\xi) -\alpha)^2 \right] \right) \notag\\
    &= \frac{2d^2}{\tau^2}\E_{\xi,\be} \left[ \|\be\|_q^2 (f(z+\tau \be,\xi)-\alpha)^2 \right]. \qquad /* \mbox{ the distribution of $\be$ is  symmetric  } */
\end{align}
Applying the Cauchy--Schwartz inequality for  \eqref{eq:fgkbbhgdddgh} and using $\sqrt{\E \left[ \|\be\|_q^4\right]} \leq a_q^2$ we obtain 
\begin{align}\label{eq:funcsq}
     \frac{d^2}{\tau^2}\E_{\xi,\be} \left[ \|\be\|_q^2 (f(z+\tau \be,\xi)-\alpha)^2 \right] 
     &\leq \frac{d^2}{\tau^2} \E_{\xi} \left[\sqrt{\E \left[ \|\be\|_q^4\right]} \sqrt{\E_{\be} \left[ f(z+\tau \be,\xi)-\alpha)^4\right]} \right]\notag\\
     &\leq \frac{d^2 a_q^2}{\tau^2} \E_{\xi}\left[\sqrt{\E_{\be} \left[ f(z+\tau \be,\xi)-\alpha)^4\right]}\right] . 
\end{align}
Next we use the following lemma.
\begin{lemma}\citet[Lemma 9]{shamir2017optimal}\label{lm:shamirLm9}
For any function $f(\be)$ which is $M$-Lipschitz w.r.t. the $\ell_2$-norm, it holds that if $\be$ is uniformly distributed on the Euclidean unit sphere, then for some  constant $c$
\[
\sqrt{\E\left[(f(\be) - \E f(\be))^4 \right]} \leq cM_2^2/d.
\]
\end{lemma}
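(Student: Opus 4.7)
The plan is to derive the bound from Lévy's concentration inequality on the Euclidean sphere. First I would recall that for any $M$-Lipschitz function $f : S^{d-1} \to \mathbb{R}$ (with respect to the $\ell_2$ geodesic or Euclidean distance, which are equivalent up to a constant factor) and $\be$ uniform on $S^{d-1}$, Lévy's inequality gives the sub-Gaussian tail
\[
\Prob\bigl\{\,|f(\be) - \mathrm{med}(f)| > t\,\bigr\} \leq C_1 \exp\!\bigl(-C_2 \, d \, t^2 / M^2\bigr),
\]
for absolute constants $C_1, C_2 > 0$, where $\mathrm{med}(f)$ is a median of $f(\be)$. Integrating this tail once gives $|\E f(\be) - \mathrm{med}(f)| \lesssim M/\sqrt{d}$, so the same sub-Gaussian bound holds (with a possibly worse constant) when $\mathrm{med}(f)$ is replaced by $\E f(\be)$.

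Next I would convert this tail bound into a moment bound using the layer-cake identity. Writing $Y \triangleq f(\be) - \E f(\be)$, I have
\[
\E[Y^4] = \int_0^\infty 4 t^3 \, \Prob\{|Y| > t\} \, dt \leq \int_0^\infty 4 t^3 \, C_1' \exp\!\bigl(-C_2' \, d \, t^2 / M^2\bigr) \, dt.
\]
The substitution $u = C_2' d t^2 / M^2$ turns the right-hand side into $c \, M^4 / d^2$ for some absolute constant $c$ (since $\int_0^\infty u \, e^{-u} du$ is finite). Taking square roots yields $\sqrt{\E[Y^4]} \leq \sqrt{c} \, M^2 / d$, which is exactly the desired estimate.

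The one mildly subtle step is justifying Lévy's inequality itself; however, this is standard (e.g.\ via the isoperimetric inequality on the sphere, or via the log-Sobolev / transportation approach), so I would cite it rather than re-prove it. The main ingredient beyond that citation is simply bookkeeping of constants when converting tail decay into fourth moments and passing from median to expectation, both of which are routine. Thus the real content of the lemma is the dimensional factor $1/d$, which comes entirely from the $d t^2$ appearing in the exponent of Lévy's inequality, reflecting the concentration of $(d-1)$-dimensional surface area around any equator.
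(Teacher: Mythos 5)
Your argument is correct: Lévy-type concentration for Lipschitz functions on the sphere gives the sub-Gaussian tail with variance proxy $M^2/d$, and integrating $4t^3$ against that tail (after the routine median-to-mean adjustment) yields $\E[(f(\be)-\E f(\be))^4] \leq c M^4/d^2$, which is the claimed bound after taking square roots. The paper itself does not prove this lemma but cites Shamir (2017, Lemma 9), and your route is essentially the standard proof given there, so there is nothing further to compare.
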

Then we  use this Lemma \ref{lm:shamirLm9} along with the fact that $f(z+\tau\be,\xi)$ is $\tau M_2(\xi)$-Lipschitz, w.r.t. $\be$ in terms of the $\ell_2$-norm. Thus for  \eqref{eq:funcsq} and $\alpha \triangleq \E_{\be}\left[f(z+\tau \be,\xi)\right]$, it holds  
\begin{align}\label{eq:finlhkjfsvn}
    \frac{d^2 a_q^2}{\tau^2} \E_{\xi}\left[\sqrt{\E_{\be} \left[ f(z+\tau \be,\xi)-\alpha)^4\right]}\right]
    &\leq \frac{d^2 a_q^2}{\tau^2} \cdot \frac{c \tau^2 \E\left[ M_2^2(\xi)\right]}{d} = c d  M_2^2 a_q^2.  
\end{align}
\begin{enumerate}[leftmargin=*]
    \item Under the Assumption \ref{ass:err_noise}
     for the second term in the r.h.s. of   \eqref{eq:sqnormE}, the following holds
    \begin{align}\label{eq:fgkssssffdgh}
        &\frac{d^2}{4\tau^2}\E_{\be} \left[ \|\be\|_q^2 \left(\delta(z+\tau\be)  -\delta(z-\tau\be)  \right)^2 \right] \leq  \frac{d^2\Delta^2}{\tau^2} \E \left[ \|\be\|_q^2  \right]\leq \frac{d^2 \Delta^2 a_q^2}{\tau^2}.
    \end{align}
    \item Under the Assumption \ref{ass:Lip_noise}
    for the second term in the r.h.s. of   \eqref{eq:sqnormE}, the following holds with an arbitrary parameter $\beta$
    \begin{align}\label{eq:noisesq}
        &\frac{d^2}{4\tau^2}\E_{\be} \left[ \|\be\|_q^2 \left(\delta(z+\tau\be)  -\delta(z-\tau\be)  \right)^2 \right] \notag \\
     &= \frac{d^2}{2\tau^2}\E_{\be} \left[ \|\be\|_q^2 \left((\delta(z+\tau\be) -\beta) -(\delta(z-\tau\be)  -\beta) \right)^2 \right] \notag\\
    &\leq \frac{d^2}{\tau^2}\E_{\be} \left[ \|\be\|_q^2 \left((\delta(z+\tau\be)-\beta)^2  +(\delta(z-\tau\be) -\beta)^2 \right) \right] \quad /* \forall a,b, (a-b)^2\leq 2a^2+2b^2 */ \notag\\
    &= \frac{d^2}{2\tau^2}\left(\E_{\be} \left[ \|\be\|_q^2 (\delta(z+\tau \be)-\beta)^2 \right] + \E_{\be} \left[\|\be\|_q^2(\delta(z-\tau \be) -\beta)^2 \right] \right) \notag\\
    &= \frac{d^2}{\tau^2}\E_{\be} \left[ \|\be\|_q^2 \left(\delta(z+\tau\be)-\beta  \right)^2 \right]. \qquad /* \mbox{ the distribution of $\be$ is  symmetric  } */ \notag \\
     &\leq \frac{d^2}{\tau^2}\sqrt{\E \left[ \|\be\|_q^4\right]} \sqrt{\E_{\be} \left[ \left(\delta(z+\tau\be)-\beta  \right)^4 \right] } \leq \frac{d^2 a_q^2}{\tau^2} \sqrt{\E_{\be} \left[ \left(\delta(z+\tau\be)-\beta  \right)^4 \right]}.
    \end{align}
\end{enumerate}
Then we  use Lemma \ref{lm:shamirLm9} together with the fact that $\delta(z+\tau\be)$ is $\tau M_{2,\delta}$-Lipschitz continuous, w.r.t. $\be$ in terms of the $\ell_2$-norm. Thus, for  \eqref{eq:noisesq}  and  $\beta \triangleq \E_{\be}\left[\delta(z+\tau \be)\right]$, the following holds
\begin{align}\label{eq:fkssoomnbd}
    \frac{d^2 a_q^2}{\tau^2}  \sqrt{\E_{\be} \left[ \left(\delta(z+\tau\be)-\beta  \right)^4 \right] } 
    &\leq \frac{d^2 a_q^2}{\tau^2}  \cdot \frac{c \tau^2  M_{2,\delta}^2}{d} \leq c a^2_q d M_{2,\delta}^2. 
\end{align}
Using  \eqref{eq:finlhkjfsvn} and \eqref{eq:fgkssssffdgh} (or  \eqref{eq:fkssoomnbd}) for  \eqref{eq:sqnormE}, we get the statement of the lemma.\\
\qed

\textit{Proof of Lemma \ref{lm:expecapproxgrad}. }
 Let us consider
\begin{align*}
  \hspace{-0.3cm} g(z,\be,\xi) 
    &= \frac{d}{2\tau}   \left(\vp(z+\tau \be,\xi) - \vp(z-\tau \be,\xi)   \right) \be \notag \\
   &=\frac{d}{2\tau} \left(\left(f(z+\tau \be,\xi) - f(z-\tau \be,\xi) \right) \be +     (\delta(z+\tau \be) - \delta(z-\tau \be)) \be\right).
\end{align*}
Using this we have the following \begin{align}\label{eq:sqprod}
    \E_{\xi, \be}\left[\la g(z,\xi,\be),r\ra\right]   &=\frac{d}{2\tau}  \E_{\xi, \be}\left[\la\left(f(z+\tau \be,\xi) - f(z-\tau \be,\xi) \right) \be, r \ra \right]\notag \\
    &+\frac{d}{2\tau}  \E_{ \be}\left[     \la (\delta(z+\tau \be) - \delta(z-\tau \be)) \be, r\ra\right].
\end{align}
Taking the expectation, w.r.t. $\be$, from  the first term of the r.h.s. of \eqref{eq:sqprod} and  using the symmetry of the distribution of $\be$, we have
 \begin{align}\label{eq:nablaeqfunc}
      &\frac{d}{2\tau}\E_{\xi, \be} \left[  \left(\la f(z+\tau \be,\xi)  - f(z-\tau \be,\xi) \right) \be, r\ra \right] 
      \notag \\
      &= \frac{d}{2\tau}\E_{\xi, \be}  \left[ \la f(z+\tau \be,\xi)\be, r\ra \right] +\frac{d}{2\tau}\E_{\xi, \be}  \left[  \la f(z-\tau \be,\xi)\be, r\ra \right]   \notag\\
         &= \frac{d}{\tau}\E_{\be} \left[ \la\E_{\xi} \left[ f(z+\tau \be,\xi)\right] \be, r\ra\right] = \frac{d}{\tau}\E_{\be} \left[ \la  f(z+\tau \be) \be, r\ra\right] =\la \nabla f^\tau(z),r\ra. \quad /*  \mbox{Lemma \ref{lm:flax}}  */ 
\end{align}
\begin{enumerate}[leftmargin=*]
    \item For the second term of the r.h.s. of \eqref{eq:sqprod} under the Assumption \ref{ass:err_noise} we obtain 
    \begin{align}\label{eq:secondtermlipschnass2}
    \frac{d}{2\tau}  \E_{ \be} \left[  \la(\delta(z+\tau \be) - \delta(z-\tau  \be)) \be, r\ra \right] \geq - \frac{d}{2\tau} 2\Delta \E_{\be} \left[ \left|\la \be, r \ra \right|\right] = -\frac{d\Delta}{\tau} \E_{\be} \left[ \left|\la \be, r \ra \right| \right].
\end{align}
    \item For the second term of the r.h.s. of \eqref{eq:sqprod} under the Assumption \ref{ass:Lip_noise} we obtain  
\begin{align}\label{eq:secondtermlipschn}
    \frac{d}{2\tau}  \E_{\be} \left[ \la (\delta(z+\tau \be) - \delta(z-\tau \be)) \be, r\ra\right] \geq - \frac{d}{2\tau} M_{2,\delta} 2\tau \E_{\be} \left[ \|\be\|_2 \left|\la\be, r\ra \right|\right] = -d M_{2,\delta}  \E_{\be} \left[ \left|\la\be, r\ra\right|\right].
\end{align}
\end{enumerate}
Using  \eqref{eq:nablaeqfunc} and \eqref{eq:secondtermlipschnass2} (or  \eqref{eq:secondtermlipschn}) for  \eqref{eq:sqprod} we get the statement of the lemma.\\
\qed

\section{Complete proof of Theorem \ref{cor:bound_noise}.}\label{sec:compl_proof_main_th}

\textit{Proof of Theorem \ref{cor:bound_noise}. }
By the definition $z^{k+1}= {\rm Prox}_{z^k}\left(\gamma_k g(z^k,\be^k,\xi^k) \right)$ we get  \cite{ben2001lectures}, for all $u\in \Z$
\begin{align*}
   \gamma_k\la g(z^k,\be^k,\xi^k), z^k-u\ra \leq V_{z^k}(u) - V_{z^{k+1}}(u) + \gamma_k^2\|g(z^k,\be^k,\xi^k)\|_q^2/2.   
\end{align*}
Taking the conditional  expectation w.r.t. $\xi, \be$ and summing for $k=1,\dots,N$  we obtain, for all $u\in \Z$
\begin{align}\label{eq:inmainsqE}
 \sum_{k=1}^N \gamma_k  \E_{\be^k,\xi^k} \left[ \la g(z^k,\be^k,\xi^k), z^k-u\ra\right] 
   &\leq  V_{z^1}(u)  + \sum_{k=1}^N\frac{\gamma_k^2}{2} \E_{\be^k,\xi^k}\left[\|g(z^k,\be^k,\xi^k)\|_q^2 \right]. 
\end{align}
{\bf Step 1.}\\
For the second term in the r.h.s of inequality \eqref{eq:inmainsqE} we use Lemma \ref{lm:shamirEnabla} and obtain
 \begin{enumerate}[leftmargin=*]
     \item  under Assumption  \ref{ass:err_noise}:
     \begin{equation}\label{eq:inmainsqEtwo1}
        \E_{\be^k,\xi^k}\left[\|g(z^k,\xi^k,\be^k)\|^2_q\right] \leq c dM_2^2 a^2_q  + {d^2 \Delta^2}{\tau^{-2}a_q^2}, 
     \end{equation}
     \item  under Assumption \ref{ass:Lip_noise}: 
\begin{align}\label{eq:inmainsqEtwo}
    \E_{\be^k,\xi^k}\left[\|g(z^k,\xi^k,\be^k)\|^2_q\right] \leq c  d (M_2^2+M_{2,\delta}^2)a^2_q,
\end{align}
 \end{enumerate}
 where    $c$ is some numerical constant and $\sqrt{\E \left[ \|\be^k\|_q^4\right]} \leq a_q^2$.

{\bf Step 2.}\\
For the l.h.s. of    \eqref{eq:inmainsqE}, we use Lemma \ref{lm:expecapproxgrad} with  $u \triangleq (x^\top,y^\top)^\top$

 \begin{enumerate}[leftmargin=*]
 \item under Assumption \ref{ass:err_noise}
 \begin{align}\label{eq:eqrefeqffxyz11}
     \sum_{k=1}^N \gamma_k \E_{\be^k,\xi^k}\left[\la g(z^k,\be^k,\xi^k), z^k-u\ra \right]
     &\geq  \sum_{k=1}^N \gamma_k \la \nabla f^\tau(z^k), z^k-u \ra  \notag \\ &+\sum_{k=1}^N \gamma_k  \E_{\be^k}\left[ \left|\la  {d\Delta}{\tau^{-1}}  \be^k, z^k-u  \ra \right|\right].
\end{align}
     \item  under Assumption \ref{ass:Lip_noise}      
\begin{align}\label{eq:eqrefeqffxyz}
     \sum_{k=1}^N \gamma_k \E_{\be^k,\xi^k}\left[\la g(z^k,\be^k,\xi^k), z^k-u\ra \right] 
     &\geq  \sum_{k=1}^N \gamma_k\la \nabla f^\tau(z^k), z^k-u\ra \notag \\ 
     &+\sum_{k=1}^N \gamma_k \E_{\be^k}\left[ \left|\la  dM_{2,\delta} \be^k, z^k-u\ra\right| \right].
\end{align}
 \end{enumerate}
For the first term of the r.h.s. of \eqref{eq:eqrefeqffxyz11} and \eqref{eq:eqrefeqffxyz} we have 
\begin{align}\label{eq:eqexpfxy}
     \sum_{k=1}^N \gamma_k\la \nabla f^\tau(z^k), z^k-u\ra 
     &= \sum_{k=1}^N \gamma_k \left\la \begin{pmatrix} \nabla_x f^\tau(x^k, y^k) \\ -\nabla_y  f^\tau(x^k,y^k) \end{pmatrix}, \begin{pmatrix} x^k-x \\ y^k-y \end{pmatrix} \right\ra \notag \\
     &=  \sum_{k=1}^N \gamma_k\left( \la \nabla_x f^\tau(x^k,y^k), x^k-x \ra  -  \la \nabla_y  f^\tau(x^k,y^k), y^k-y \ra \right) \notag \\
     &\geq  \sum_{k=1}^N \gamma_k ( f^\tau(x^k,y^k) - f^\tau(x,y^k))  -  ( f^\tau(x^k,y^k) - f^\tau(x^k,y)) \notag \\
     &=  \sum_{k=1}^N \gamma_k (f^\tau(x^k,y)  - f^\tau(x,y^k)).
\end{align}
Then we use the fact function $f^\tau(x,y)$ is convex in $x$ and concave in $y$ and  obtain 
\begin{align}\label{eq:fineqE}
     \frac{1}{\sum_{i=1}^{N}\gamma_k} \sum_{k=1}^N \gamma_k (f^\tau(x^k,y)  - f^\tau(x,y^k))  &\geq  f^\tau\left(\frac{\sum_{k=1}^N \gamma_k x^k}{\sum_{k=1}^{N}\gamma_k},y\right)  - f^\tau\left(x, \frac{\sum_{k=1}^N \gamma_k y^k}{\sum_{k=1}^{N}\gamma_k}\right)\notag \\
     &=  f^\tau\left(\hat x^N,y\right)  - f^\tau\left(x, \hat y^N \right) ,
\end{align}
where $ (\hat x^N, \hat y^N)$ is the output of the Algorithm \ref{alg:zoSPA}. 
Using \eqref{eq:fineqE}  for  \eqref{eq:eqexpfxy} we get 
\begin{align}\label{eq:fpslff}
     \sum_{k=1}^N \gamma_k\la \nabla f^\tau(z^k), z^k-u\ra 
     &\geq \sum_{k=1}^{N}\gamma_k \left( f^\tau\left( \hat x^N,y\right)  - f^\tau\left(x, \hat y^N \right) \right).
\end{align}
Next we estimate the term $\E_{\be^k} \left[    |\la \be^k, z^k-u\ra|  \right] $ in  \eqref{eq:eqrefeqffxyz11} and \eqref{eq:eqrefeqffxyz},   by the Lemma  \ref{lm:dotprodvece} 
\begin{align}\label{eq:sectermErhs}
  \E_{\be^k} \left[    \left|\la \be^k, z^k-u\ra\right|  \right] \leq  {\|z^k-u\|_2}/{\sqrt{d}}. 
\end{align}
Now  we substitute   \eqref{eq:fpslff} and  \eqref{eq:sectermErhs} to  \eqref{eq:eqrefeqffxyz11} and \eqref{eq:eqrefeqffxyz}, and get
 \begin{enumerate}[leftmargin=*]
 \item under Assumption \ref{ass:err_noise}
 \begin{align}\label{eq:eqrefeqffxyz1122}
   \sum_{k=1}^N \gamma_k   \E_{\be^k,\xi^k}\left[ \la g(z^k,\be^k,\xi^k), z^k-u\ra \right] &\geq
     \sum_{k=1}^{N}\gamma_k  \left(f^\tau\left( \hat x^N,y\right)- f^\tau\left(x, \hat y^N \right) -   {\sqrt{d}\Delta\|z^k-u\|_2}{\tau^{-1}} \right). 
\end{align}
     \item  under Assumption \ref{ass:Lip_noise}      
\begin{align}\label{eq:eqrefeqffxyz22}
     \sum_{k=1}^N \gamma_k  \E_{\be^k,\xi^k}\left[\la g(z^k,\be^k,\xi^k), z^k-u\ra \right] 
     &\geq  \sum_{k=1}^{N}\gamma_k  \left(f^\tau\left( \hat x^N,y\right) - f^\tau\left(x, \hat y^N \right)  -  \sqrt{d}M_{2,\delta}\|z^k-u\|_2\right). 
\end{align}
 \end{enumerate}
{\bf Step 3.} (under Assumption \ref{ass:err_noise}) \\
Now we combine  \eqref{eq:eqrefeqffxyz1122}   with   \eqref{eq:inmainsqEtwo1} for  \eqref{eq:inmainsqE} and obtain under Assumption \ref{ass:err_noise} the following
     \begin{align}\label{eq:inmainsqEtwo133}
     &\sum_{k=1}^{N}\gamma_k \left(f^\tau\left( \hat x^N,y\right)  - f^\tau\left(x, \hat y^N \right)
     -   {\sqrt{d} \Delta \|z^k-u\|_2}{\tau^{-1}} \right)  \leq  V_{z^1}(u) +  \sum_{k=1}^N \frac{\gamma_k^2}{2} \left(c  dM_2^2 a^2_q + {d^2 \Delta^2}{\tau^{-2}a_q^2}\right).
     \end{align}
Using Lemma \ref{lm:shamirSmoothFun} we obtain
 \begin{align*}
    f^\tau\left( \hat x^N,y\right) -  f^\tau\left(x, \hat y^N \right) 
    &\geq f\left( \hat x^N,y\right) -  f\left(x, \hat y^N \right)  - 2\tau M_2.
 \end{align*}
 Using this we can rewrite \eqref{eq:inmainsqEtwo133} as follows
 \begin{align}\label{eq:exEeqE1}
       f\left( \hat x^N,y\right) -  f\left(x, \hat y^N \right) 
      &\leq 
     \frac{ V_{z^1}(u)}{\sum_{k=1}^{N}\gamma_k}
      +\frac{c dM_2^2  a^2_q+ d^2 \Delta^2\tau^{-2}a_q^2}{\sum_{k=1}^{N}\gamma_k}\sum_{k=1}^N \frac{\gamma_k^2}{2}  \notag \\
      &+{ \sqrt{d} \Delta \max_{k}\|z^k-u\|_2}{\tau^{-1}} +  2\tau M_2.
 \end{align}
  For the r.h.s. of \eqref{eq:exEeqE1}  we use the definition of  the $\omega$-diameter  of $\Z$:\\
$\D \triangleq \max_{z,v\in \Z}\sqrt{2V_{z}(v)}$ and estimate $\|z^k-u\|_2 \leq \D$ for all $z^1,\dots, Z^k$ and all $u\in \Z$.
 Using this for  \eqref{eq:exEeqE1} and   taking the maximum in $(x,y) \in (\X,\Y)$, we obtain
 \begin{align}\label{eq:dghjsklslsl1}
    \max_{y \in \Y} f\left( \hat x^N,y\right) -  \min_{x\in \X} f\left(x, \hat y^N \right)
     &\leq \frac{ \D^2+ (c  d M_2^2 a^2_q +
     d^2 \Delta^2\tau^{-2} a_q^2)
    \sum_{k=1}^{N} {\gamma_k^2}/{2}}{\sum_{k=1}^{N}\gamma_k}
       \notag \\
    &+ {\sqrt{d}\Delta \D}{\tau^{-1}}
     +  2\tau M_2.  
 \end{align}
Taking the expectation of \eqref{eq:dghjsklslsl1} and choosing  learning rate $\gamma_k 
=\frac{\D}{M_{\rm case1}}\sqrt{\frac{2}{N}}$ with $M^2_{\rm case1} \triangleq cdM_2^2 a_q^2 + d^2\Delta^2\tau^{-2}a_q^2$ in  \eqref{eq:dghjsklslsl1} we get
 \begin{align*}
      \E\left[ \max_{y \in \Y} f\left( \hat x^N,y\right) -  \min_{x\in \X} f\left(x, \hat y^N \right)\right]
     &\leq  
     M_{\rm case1}\D\sqrt{{2}/{N}}
     + {\sqrt{d} \Delta\D}{\tau^{-1}} 
     +  2\tau M_2.  
 \end{align*}
 {\bf Step 4.} (under Assumption \ref{ass:Lip_noise})\\
Now we combine \eqref{eq:eqrefeqffxyz22}  with   \eqref{eq:inmainsqEtwo} for  \eqref{eq:inmainsqE} and obtain under Assumption \ref{ass:Lip_noise} 
\begin{align}\label{eq:inmainsqEtwo334}
    \sum_{k=1}^{N}\gamma_k f^\tau\left( \hat x^N,y\right)  - f^\tau\left(x, \hat y^N \right) 
    &- \sum_{k=1}^N \gamma_k \sqrt{d}M_{2,\delta}\|z^k-u\|_2 \leq V_{z^1}(u)+  \sum_{k=1}^N \frac{\gamma_k^2}{2} c a^2_q d (M_2^2+M_{2,\delta}^2).
\end{align}
Using Lemma \ref{lm:shamirSmoothFun} we obtain
 \begin{align*}
    f^\tau\left( \hat x^N,y\right) -  f^\tau\left(x, \hat y^N \right) 
    &\geq f\left( \hat x^N,y\right) -  f\left(x, \hat y^N \right)  - 2\tau M_2.
 \end{align*}
 Using this we  can rewrite \eqref{eq:inmainsqEtwo334} as follows
 \begin{align}\label{eq:exEeqE}
      f\left( \hat x^N,y\right) -  f\left(x, \hat y^N \right)  
      &\leq 
     \frac{ V_{z^1}(u)}{\sum_{k=1}^{N}\gamma_k}
      +\frac{c  d (M_2^2+M_{2,\delta}^2)a^2_q}{\sum_{k=1}^{N}\gamma_k}\sum_{k=1}^N \frac{\gamma_k^2}{2} \notag\\
      &+ \sqrt{d}M_{2,\delta} \max_{k} \|z^k-u\|_2  +  2\tau M_2. 
 \end{align}
   For the r.h.s. of \eqref{eq:exEeqE}  we use the definition of  the $\omega$-diameter  of $\Z$: \\
$\D \triangleq \max_{z,v\in \Z}\sqrt{2V_{z}(v)}$ and estimate $\|z^k-u\|_2 \leq \D$ for all $z^1,\dots, Z^k$ and all $u\in \Z$.
 Using this for  \eqref{eq:exEeqE} and   taking the maximum in $(x,y) \in (\X,\Y)$, we obtain
 \begin{align}\label{eq:dghjsklslsl}
      \max_{y \in \Y} f\left( \hat x^N,y\right) -  \min_{x\in \X} f\left(x, \hat y^N \right)
     &\leq \frac{ \D^2+ c  d (M_2^2 +M_{2,\delta}^2)a^2_q\sum_{k=1}^N {\gamma_k^2}/{2}}{\sum_{k=1}^{N}\gamma_k}
    \notag\\
     &+  \sqrt{d} M_{2,\delta} \D
     +  2\tau M_2.  
 \end{align}
 Taking the expectation of \eqref{eq:dghjsklslsl1} and choosing  learning rate  $\gamma_k 
=\frac{\D}{M_{\rm case2}}\sqrt{\frac{2}{N}}$ with $M_{\rm case2}^2 \triangleq c d (M_2^2+M_{2,\delta}^2)a_q^2$ in  \eqref{eq:dghjsklslsl} we get
 \begin{align*}
      \E\left[ \max_{y \in \Y} f\left( \hat x^N,y\right) -  \min_{x\in \X} f\left(x, \hat y^N \right)\right]
     &\leq  M_{\rm case2}\D\sqrt{{2}/{N}}  + \sqrt{d}  M_{2,\delta} \D 
     +  2\tau M_2.  
 \end{align*}
\qed

\section{Sketch of the proof of Theorem \ref{th:conv_rate_restarts}}\label{app:proof_restarts}

\textit{Sketch of the proof of Theorem \ref{th:conv_rate_restarts}. }
We repeat the proof of Theorem \ref{cor:bound_noise}, except that now $z^1$ can be chosen in a stochastic way. Moreover, now we use a rougher inequality instead of \eqref{eq:sectermErhs}
\begin{align}\label{eq:sectermErhs_restarts}
  \E_{\be^k} \left[    \left|\la \be^k, z^k-u\ra\right|  \right] \leq  {\D}/{\sqrt{d}}.
\end{align}
{\bf Step 1.} (under Assumption \ref{ass:err_noise}) \\
Taking the expectation in \eqref{eq:exEeqE1}, choosing $(x,y) = (x^{\star}, y^{\star})$, and
 learning rate $\gamma_k 
=\frac{\sqrt{\E [V_{z^1}(z^{\star})]}}{M_{\rm case1}}\sqrt{\frac{2}{N}}$ with $M^2_{\rm case1} \triangleq cdM_2^2 a_q^2 + d^2\Delta^2\tau^{-2}a_q^2$  we get
 \begin{align} \label{rate:restarts1}
      \E\left[ f\left( \hat x^N,y^{\star}\right) -   f\left(x^{\star}, \hat y^N \right)\right]
     &\leq  
     \sqrt{\frac{2}{N}}M_{\rm case1}\sqrt{\E [V_{z^1}(z^{\star})]}
     + {\sqrt{d}\Delta\D}{\tau^{-1}} 
     +  2\tau M_2.  
 \end{align}
 {\bf Step 2.} (under Assumption \ref{ass:Lip_noise})\\
Taking the expectation in \eqref{eq:exEeqE}, choosing $(x,y) = (x^{\star}, y^{\star})$, and  learning rate 
 $\gamma_k 
=\frac{\sqrt{\E [V_{z^1}(z^{\star})]}}{M_{\rm case2}}\sqrt{\frac{2}{N}}$ with $M_{\rm case2}^2 \triangleq c d (M_2^2+M_{2,\delta}^2)a_q^2$  we obtain
 \begin{align} \label{rate:restarts2}
      \E\left[ f\left( \hat x^N,y^{\star}\right) -   f\left(x^{\star}, \hat y^N \right)\right]
     &\leq  \sqrt{\frac{2}{N}}M_{\rm case2}\sqrt{\E [V_{z^1}(z^{\star})]}  + \sqrt{d} M_{2,\delta} \D  
     +  2\tau M_2.  
 \end{align}
{\bf Step 3.} (Restarts)\\
Now let $\tau$ be chosen as $\tau = \O\left(\e/M_2\right)$, where $\e$ is the desired accuracy to solve problem \eqref{eq:min_max_problem}. If one of the two following statement holds
\begin{enumerate}
    \item Assumption \ref{ass:err_noise} holds  and $\Delta =\O\left( \frac{\e^2}{\D M_2\sqrt{ d}}\right)$
    \item Assumption \ref{ass:Lip_noise} holds and 
 $M_{2,\delta} =\O\left( \frac{\e}{\D \sqrt{ d}}\right) $
\end{enumerate}
then  we obtain the convergence rate of the following form
 \begin{align}\label{restart:rate}
     \E\left[ f\left( \hat x^{N_1},y^{\star}\right) -   f\left(x^{\star}, \hat y^{N_1} \right)\right]
     &= \widetilde \O\left(\frac{\sqrt{d} M_2 a_q  }{\sqrt{N_1}}\sqrt{\E [V_{z^1}(z^{\star})]}\right).
 \end{align}
  In this step we will employ the restart technique that is a generalization of the technique proposed in \cite{juditsky2014deterministic}.\\
 For the l.h.s. of   \eqref{restart:rate} we use the Assumption \ref{ass:strongly_convex}. For the r.h.s. of   \eqref{restart:rate} we use the fact $V_{z^1}(z^{\star}) = \widetilde \O (\|z^1 -z^{\star}\|^2_p)$  from  \citet[Remark 3]{gasnikov2018universal}  
 \begin{align}\label{restart:norms}
 \frac{\mu_{r}}{2} \E\left[  \|z^{N_1} - z^{\star}\|_p^{r}\right]
 &\leq
     \E\left[ f\left( \hat x^{N_1},y^{\star}\right) -   f\left(x^{\star}, \hat y^{N_1} \right)\right] = \widetilde \O\left(\frac{ \sqrt{d} M_2  a_q}{\sqrt{{N_1}}} \sqrt{\E\left[\|z^1 -z^{\star}\|_p^2\right]}\right).
 \end{align}
 Then for the l.h.s of   \eqref{restart:norms} we  use the Jensen inequality and  get the following
 \begin{align}
 \frac{\mu_{r}}{2} \left({\E\left[\|z^{N_1} - z^{\star}\|_p^2\right]}\right)^{r/2} &\leq   \frac{\mu_{r}}{2} \E\left[  \|z^{N_1} - z^{\star}\|_p^{r}\right] \leq \E\left[ f\left( \hat x^{N_1},y^{\star}\right) -   f\left(x^{\star}, \hat y^{N_1} \right)\right] \notag\\
     &= \widetilde \O\left( \frac{\sqrt{d} M_2 a_q }{\sqrt{{N_1}}} \sqrt{\E\left[\|z^1 -z^{\star}\|_p^2\right]}\right).
 \end{align}
Finally, 
let us introduce $R_k \triangleq \sqrt{\E\left[\|z^{N_k} - z^{\star}\|_p^2\right]}$ and $R_0 \triangleq \sqrt{\E\left[\|z^{1} - z^{\star}\|_p^2\right]}$. Then we
take ${N_1}$ so as to halve the distance to the solution and get
\[ {N_1} = \widetilde \O \left( \frac{ d M_2^2 a_q^2 }{\mu_{r}^2 R_1^{2(r-1)}} \right).\]
 Next, after $N_1$ iterations, we restart the original method and set $z^1 = z^{N_1}$. We determine $N_2$ similarly: we halve the distance $R_1$ to the solution, and so on. 
 Thus, after $k$  restarts, the total number of iterations will be
 \begin{align} \label{restart:N}
     N = N_1 +\dots + N_k = \widetilde \O \left(\frac{ 2^{2(r-1)}d M_2^2 a_q^2  }{\mu_{r}^2 R_0^{2(r-1)}}\left(1+2^{2(r-1)} + \dots + 2^{2(k-1)(r-1)}\right) \right).
 \end{align} 
 Now we need  to determine the number of restarts. To do  so, we fix the desired accuracy  and using the inequality \eqref{restart:norms} we obtain 
 \begin{align}\label{restart:epsilon}
     \E [\hat{\e}_{\rm sad}] =  \widetilde \O \left(\frac{\mu_{r} R_k^{r}}{2}\right)=   \widetilde \O\left( \frac{ a_q M_2 \sqrt{d} }{\sqrt{N_k}}  R_{k-1}\right) = \widetilde \O \left(\frac{\mu_{r}R_0^{r}}{2^{kr}}\right)  \leq \e.
 \end{align} 
 Then to fulfill this condition, one can choose $k =  \log_{2} (\widetilde \O\left( \mu_{r}R_0^{r}/\e\right)) / r$ and using   \eqref{restart:N} we get the total number of iterations
\[ N = \widetilde \O\left(\frac{ 2^{2k(r-1)}d  M_2^2 a_q^2}{\mu_{r}^2 R_0^{2(r-1)}} \right) = \widetilde \O\left(\frac{ d M_2^2 a_q^2}{\mu_{r}^{2/r} \e^{2(r-1)/r}} \right).\]
If in Theorem \ref{th:conv_rate_restarts}, we use a tighter inequality   \eqref{eq:sectermErhs} instead of
\eqref{eq:sectermErhs_restarts} (as in Theorem \ref{cor:bound_noise}), then
the estimations on the $\Delta$ and $M_{2,\delta}$   can be improved.
Choosing $u = (x^*, y^*)$ we can provide exponentially decreasing sequence of $\D^k = \E \|z^k - u\|_2$ in   \eqref{restart:rate} and get 
\begin{enumerate}
    \item under Assumption \ref{ass:err_noise} $\Delta \lesssim \frac{\mu_{r}^{1/r}\e^{2-1/r}}{M_2\sqrt{d}}$
    \item under Assumption \ref{ass:Lip_noise} 
$M_{2,\delta} \lesssim \frac{\mu_{r}^{1/r}\e^{1-1/r}}{\sqrt{d}}$.
\end{enumerate}
\qed

\section{Sketch of the proof of Theorem \ref{th:conv_rate_restarts_prob}}\label{app:proof_high_bound}

\textit{Sketch of the proof of Theorem \ref{th:conv_rate_restarts_prob}. }

\begin{proof}[Sketch of the proof of  Theorem \ref{th:conv_rate_restarts_prob}]
By the definition $z^{k+1}= {\rm Prox}_{z^k}\left(\gamma_k g(z^k,\be^k,\xi^k) \right)$ we get  \cite{ben2001lectures}, for all $u\in \Z$
\begin{align}
   \gamma_k\la g(z^k,\be^k,\xi^k), z^k-u\ra \leq V_{z^k}(u) - V_{z^{k+1}}(u) + \gamma_k^2\|g(z^k,\be^k,\xi^k)\|_2^2/2   \notag
\end{align}
Summing for $k=1,\dots,N$ we obtain, for all $u\in \Z$
\begin{align}\label{eq:inmainsqE_prob}
   \sum_{k=1}^N \gamma_k\la g(z^k,\be^k,\xi^k), z^k-u\ra 
   &\leq V_{z^1}(u)  + \sum_{k=1}^N\gamma_k^2\|g(z^k,\be^k,\xi^k)\|_2^2/2. 
\end{align}
Next we provide the definition of zeroth-order gradient approximation
similarly to  $\eqref{eq:grad_approx}$ but under zero noise 
\begin{equation}\label{eq:grad_approx_f}
    g_f(z,\xi, \be) = \frac{d}{2\tau}\left(f(z+\tau \be,\xi) - f(z-\tau \be,\xi)   \right) \begin{pmatrix} \be_x \\ -\be_y \end{pmatrix},
\end{equation}
where $\tau>0$ is some constant:
\begin{lemma}[Concentration of Lipschitz functions on the Euclidean unit sphere ]\cite[proof of Proposition 2.10 and Corollary 2.6]{ledoux2001concentration}\label{lm:concentr_Lip}
 For any function $g(\be)$ which is $L$-Lipschitz w.r.t. the $\ell_2$-norm, it holds that if $\be$ is uniformly distributed on the Euclidean unit sphere, then
 \begin{equation*}
     \Prob\left( |g(\be) - \E\left[g(\be) \right] | >t \right)\leq 2 \exp\left( -c'dt^2/L^2 \right),
 \end{equation*}
 where $c'$ is some numerical constant.
\end{lemma}
{\bf Step 1.}\\
Now we estimate the second term in the r.h.s of inequality \eqref{eq:inmainsqE_prob} 
 under Assumption \ref{ass:err_noise}
\begin{align}\label{eq:prob_var}
    \|g(z^k,\be^k,\xi^k)\|_2^2  
    &= \frac{d^2}{4\tau^2}  \left(f(z+\tau \be,\xi) +\delta(z+\tau\be) - f(z-\tau \be,\xi) -\delta(z-\tau\be)  \right)^2  \notag\\
     &\leq  \|g_f(z^k,\be^k,\xi^k)\|_2^2 +   \frac{d^2\Delta^2}{2\tau^2}. \qquad /* \forall a,b, (a-b)^2\leq 2a^2+2b^2 */
\end{align}
To estimate the first term in the r.h.s of   \eqref{eq:prob_var}, we notice that  function $g_f(z^k,\be^k,\xi^k)$    is uniformly $d M_2$-Lipschitz continious and $\E_{\be^k} g_f(z^k,\be^k,\xi^k) = 0$. Thus, we use Lemma \ref{lm:concentr_Lip}
 and obtain for some constant $c$: 
\begin{align}\label{eq:estim_of_g_f}
    \Prob \left( \| g_f(z^k,\be^k,\xi^k) \|_2 > t  \right) \leq 2\exp \left( - \frac{c t^2}{M_2^2 d} \right) 
\end{align}
Let us denote 
$\phi_k = \gamma_k^2 \|g_f(z^k,\be^k,\xi^k)\|_2^2/2$ and
$\sigma_k = 2 \gamma_k^2 d M_2^2 $. Then we consider conditional expectation 
\begin{align*}
     \E_{|k-1} \left[ \exp \left(\frac{|\phi_k|}{\sigma_k}\right)\right] & = \E_{|t-1} \left[ \exp\left( \frac{\|g_f(z^k,\be^k,\xi^k)\|_2^2}{4 d M_2^2 }\right) \right]  \\
     &= \int_0^{\infty} \Prob \left( \exp \left( \frac{\|g_f(z^k,\be^k,\xi^k)\|_2^2}{4 d  M_2^2 }\right) \geq \tilde{t}  \right) d \tilde{t} \\
     &\leq \int_0^{1} 1 d \tilde{t} 
      + \int_1^{\infty} \underbrace{\Prob \left( \exp \left( \frac{\|g_f(z^k,\be^k,\xi^k)\|_2^2}{4 d M_2^2 }\right) \geq \tilde{t}  \right)}_{\text{by } \eqref{eq:estim_of_g_f} : ~\leq \exp \left( - \frac{c \sigma_k^2  \ln \tilde{t}}{ d M_2^2 } \right) \leq \left(\frac{1}{\tilde{t}} \right)^{\frac{c \sigma_k^2}{d M_2^2}} = \left(\frac{1}{\tilde{t}} \right)^{2}} d \tilde{t} \leq 1 + 1 \notag \\
      &\leq  \exp(1).
\end{align*}
Thus, we can use \cite[Lemma 2, case B]{lan2012validation} and get
\begin{align*}
    &\Prob \left( \left| \sum_{k=1}^N \gamma_k^2 \|g_f(z^k,\be^k,\xi^k)\|_2^2  \right| > 2 d M_2^2  \sum_{k=1}^N \gamma_k^2 + 2\Omega   d M_2^2  \sqrt{\sum_{k=1}^N \gamma_k^4} \right) \notag \\
    &\leq \exp \left(-\frac{\Omega^2}{12}\right) + \exp\left(- \frac{3\sqrt{N} \Omega}{4}\right) 
\end{align*}

Thus using  this and  \eqref{eq:prob_var}  we can estimate the second term in the r.h.s of inequality \eqref{eq:inmainsqE_prob} as follows
\begin{align}
    &\Prob \left( \left| \sum_{k=1}^N \gamma_k^2 \|g(z^k,\be^k,\xi^k)\|_2^2/2 \right| > 2  d M_2^2  \sum_{k=1}^N \gamma_k^2 + 2\Omega d  M_2^2  \sqrt{\sum_{k=1}^N \gamma_k^4} +  \frac{d^2\Delta^2 }{2 \tau^2  }\sum_{k=1}^N \gamma_k^2 \right)  \notag \\
    &\leq \exp \left(-\frac{\Omega^2}{12} \right) + \exp \left(-\frac{3\sqrt{N} \Omega}{4} \right) \label{eq:first_step_second_term}
\end{align}
{\bf Step 2.}\\
Using the notation \eqref{eq:grad_approx_f} we rewrite the l.h.s. of    \eqref{eq:inmainsqE_prob} as following:
\begin{align}
    \sum_{k=1}^N \gamma_k\la g(z^k,\be^k,\xi^k), z^k-u\ra &=\sum_{k=1}^N \gamma_k\la g_f(z^k,\be^k,\xi^k) - \nabla f^{\tau} (z^k) , z^k-u\ra + \nonumber\\
    & + \sum_{k=1}^N \gamma_k\la \frac{d}{2\tau} (\delta(z^k+\tau \be) - \delta(z^k-\tau \be)) \be   , z^k-u\ra \notag \\
    &+ \sum_{k=1}^N \gamma_k\la \nabla f^{\tau} (z^k) , z^k-u\ra. \label{eq:step2_rewriten}
\end{align}
\begin{enumerate}[leftmargin=*]
    \item For the first term in the  r.h.s. of    \eqref{eq:step2_rewriten}, we provide the following notions:
$$\sigma_k = 2 \gamma_k \sqrt{d} M_2  \sqrt{V_{z^1}(u)} \Omega ,$$
$$\phi_k (\xi_k)= \gamma_k\la g_f(z^k,\be^k,\xi^k) - \nabla f^{\tau} (z) , z^k-u\ra .$$
For applying the case A of Lemma 2 from \cite{lan2012validation} we need to estimate the module of function $\gamma_k\la g_f(z^k,\be^k,\xi^k) , z^k-u\ra$. Using Assumption \ref{ass:uni_Lip_obj} we obtain:
\begin{align}
    \left| \gamma_k\la g_f(z^k,\be^k,\xi^k) , z^k-u\ra \right| &\stackrel{\eqref{eq:grad_approx_f}}{=}  \left| \gamma_k\la \frac{d}{2\tau}\left(f(z+\tau \be,\xi) - f(z-\tau \be,\xi)   \right) \begin{pmatrix} \be_x \\ -\be_y \end{pmatrix} , z^k-u\ra \right|\nonumber\\
    & \stackrel{Ass. \ref{ass:uni_Lip_obj}}{\leq} \gamma_k d M_2 \left| \left\la  \be , z^k-u\right\ra \right|. \label{eq:estim_of_scalar_product}
\end{align}
Now we need to estimate the term $\left| \la  \be , z^k-u\ra \right|$. To do this, using the Poincaré's lemma from \cite{lifshits2012lectures} paragraph 6.3 we rewrite $\be$ in different form:
\begin{align}\label{eq:poincare_def_e}
    \be \stackrel{D}{=} \frac{\eta}{\sqrt{\eta_1^2 + \dots + \eta_d^2}},
\end{align}

where $\eta = \left( \eta_1, \eta_2, \dots , \eta_d \right)^T = \mathcal{N} \left( 0 , I_d \right)$. 
Using Lemma 1 \cite{laurent2000adaptive}
it follows:
\begin{align}
    \Prob \left(  \sum_{k=1}^d \left|\eta_k\right|^2 \le d -  2 \sqrt{\Omega d} \right) \leq  \exp(- \Omega). \label{eq:step2_sphere_rewriten}
\end{align}
Using the definition of $\eta$ we obtain:
\begin{align}
    \la \eta, z^k-u \ra \stackrel{D}{=} \mathcal{N} \left( 0 , \|z^k - u\|_2^2 \right). \label{eq:step2_poincare}
\end{align}
Using   \eqref{eq:step2_sphere_rewriten}, \eqref{eq:step2_poincare} and \eqref{eq:poincare_def_e} we can estimate the r.h.s of   \eqref{eq:estim_of_scalar_product}:
\begin{align}
    \Prob &\left(     \left| \la \be, z^k-u \ra \right| > \frac{ \Omega V_{z^k}(u)}{\sqrt{d}} \right) \leq 3 \exp (-\Omega/4). \label{eq:step1_sphere_rewriten_2}
\end{align}

Moreover, using   \eqref{eq:estim_of_scalar_product},   \eqref{eq:step1_sphere_rewriten_2} and the fact that $V_{z^k}(u) = \O ( V_{z^1}(u))$ for all $k \geq 1$ from \cite{gorbunov2021gradient} we have:
\begin{align*}
    \Prob \left(\exp \left( \left(\gamma_k\la g_f(z^k,\be^k,\xi^k) - \nabla f^{\tau} (z) , z^k-u\ra \right)^2/ \sigma_k^2 \right)  \geq \exp(1)\right) \leq 3 \exp (-\Omega/4).
\end{align*}
 From \cite[case A of Lemma 2]{lan2012validation} it holds
\begin{align}\label{eq:step2_martingale_rewriten}
    &\Prob \left( \left| \sum_{k=1}^N \gamma_k\la g_f(z^k,\be^k,\xi^k) - \nabla f^{\tau} (z) , z^k-u\ra  \right| > 4 \sqrt{d} \Omega^2 \sqrt{V_{z^1}(u)} M_2  \sqrt{  \sum_{k=1}^N \gamma_k^2 } \right) \notag \\
    &\leq  2 \exp (-\Omega^2/3) + 3 \exp (-\Omega/4). 
\end{align}
\item For the second term in r.h.s of   \eqref{eq:step2_rewriten} under the Assumption \ref{ass:err_noise} we obtain :
\begin{align}
    & \sum_{k=1}^N \gamma_k\la \frac{d}{2\tau} (\delta(z^k+\tau \be) - \delta(z^k-\tau \be)) \be   , z^k-u\ra \geq - \sum_{k=1}^N \gamma_k \frac{d \Delta}{\tau} \left|\la \be, z^k-u \ra \right| \label{eq:step2_delta_rewriten}
\end{align}

Using   \eqref{eq:step1_sphere_rewriten_2} we can estimate the r.h.s of   \eqref{eq:step2_delta_rewriten}:
\begin{align}
    \Prob &\left(   \sum_{k=1}^N \gamma_k \frac{d \Delta}{\tau} \left| \la \be, z^k-u \ra \right| > \frac{ \Omega \D \sqrt{d} \Delta }{\tau}\sum_{k=1}^N \gamma_k \right) \leq 3 \exp(-\Omega/4). \label{eq:step2_sphere_rewriten_2}
\end{align}
\item We rewrite the third term in r.h.s. of    \eqref{eq:step2_rewriten} in the following form: 
\begin{align} \label{eq:eqexpfxy_prob}
    \sum_{k=1}^N \gamma_k\la \nabla f^{\tau} (z^k) , z^k-u\ra 
     &=  \sum_{k=1}^N \gamma_k\left( \la \nabla_x f^\tau(x^k,y^k), x^k-x \ra  -  \la \nabla_y  f^\tau(x^k,y^k), y^k-y \ra \right) \notag \\
     &\geq  \sum_{k=1}^N \gamma_k ( f^\tau(x^k,y^k) - f^\tau(x,y^k))  -  ( f^\tau(x^k,y^k) - f^\tau(x^k,y)) \notag \\
     &=  \sum_{k=1}^N \gamma_k (f^\tau(x^k,y)  - f^\tau(x,y^k)).
\end{align}
Then we use the fact function $f^\tau(x,y)$ is convex in $x$ and concave in $y$ and  obtain 
\begin{align}\label{eq:fineqE_prob}
     \frac{1}{\sum_{i=1}^{N}\gamma_k} \sum_{k=1}^N \gamma_k (f^\tau(x^k,y)  - f^\tau(x,y^k))  &\leq   f^\tau\left(\hat x^N,y\right)  - f^\tau\left(x, \hat y^N \right) ,
\end{align}
where $ (\hat x^N, \hat y^N)$ is the output of the Algorithm \ref{alg:zoSPA}. 
Using   \eqref{eq:fineqE_prob}  for   \eqref{eq:eqexpfxy_prob} we get 
\begin{align}\label{eq:fpslff_prob}
      \sum_{k=1}^N \gamma_k\la \nabla f^\tau(z^k), z^k-u\ra  
     &\geq  \sum_{k=1}^{N}\gamma_k  \left( f^\tau\left( \hat x^N,y\right)  - f^\tau\left(x, \hat y^N \right) \right) .
\end{align}
\end{enumerate}

{\bf Step 4.} \\
Then we use this with    \eqref{eq:step2_rewriten}, \eqref{eq:step2_delta_rewriten} and \eqref{eq:fpslff_prob} for   \eqref{eq:inmainsqE_prob} and obtain
\begin{align}
      &f^\tau\left( \hat x^N,y\right)   - f^\tau\left(x, \hat y^N \right)  \leq   \frac{V_{z^1}(u)}{\sum_{k=1}^{N}\gamma_k } + \frac{1}{\sum_{k=1}^{N}\gamma_k}  \sum_{k=1}^N \frac{\gamma_k^2}{2} \|g(z^k,\be^k,\xi^k)\|_2^2  \nonumber \\
    & -\frac{1}{\sum_{k=1}^{N}\gamma_k}  \sum_{k=1}^N \gamma_k\la g_f(z^k,\be^k,\xi^k) - \nabla f^{\tau} (z^k) , z^k-u\ra + \frac{1}{\sum_{k=1}^{N}\gamma_k}  \sum_{k=1}^N \gamma_k \frac{d \Delta}{\tau} \la \be, z^k-u \ra. \label{eq:exEeqE_prob}
\end{align}

 For the l.h.s. of   \eqref{eq:exEeqE_prob} we use Lemma \ref{lm:shamirSmoothFun} and obtain
 \begin{align}
    f^\tau\left( \hat x^N,y\right) -  f^\tau\left(x, \hat y^N \right) 
    &\geq f\left( \hat x^N,y\right) -  f\left(x, \hat y^N \right)  - 2\tau M_2. \label{eq:f_tau_to_f}
 \end{align}
 Using   \eqref{eq:first_step_second_term}, \eqref{eq:step2_martingale_rewriten}, \eqref{eq:step2_sphere_rewriten_2}, \eqref{eq:f_tau_to_f} and taking the maximum in $(x,y) \in (\X,\Y)$, we obtain we can rewrite   \eqref{eq:exEeqE_prob} as follow:
 
 \begin{align}\label{eq:dghjsklslsl_proba}
       \Prob & \left( \max_{y \in \Y} f\left( \hat x^N,y\right) -  \min_{x\in \X} f\left(x, \hat y^N \right) \geq \frac{4 \Omega^2 \sqrt{d} \sqrt{V_{z^1}(u)} M_2  \sqrt{  \sum_{k=1}^N \gamma_k^2 }}{{\sum_{k=1}^{N}\gamma_k}} + \frac{2  M_2^2 d \sum_{k=1}^N \gamma_k^2}{c \sum_{k=1}^N \gamma_k }  \right. \nonumber\\
      & \left. \frac{ \Omega \D \sqrt{d} \Delta }{\tau } + \Omega \frac{2  M_2^2 d \sqrt{\sum_{k=1}^N \gamma_k^4}}{c \sum_{k=1}^N \gamma_k } +  \frac{d^2\Delta^2 \sum_{k=1}^N \gamma_k^2}{2 \tau^2 \sum_{k=1}^N \gamma_k } + \frac{V_{z^1}(u)}{\sum_{k=1}^{N}\gamma_k }  +2 \tau M_2 \right)\notag\\
      & \hspace{2cm}\leq \exp (-\Omega^2/12) + \exp (-3\sqrt{N} \Omega/4) +  2 \exp (-\Omega^2/3) + 6 \exp (-\Omega/4).
\end{align}

 Choosing the stepsize $\gamma_k 
=\frac{\sqrt{V_{z^1}(u)}}{M}\sqrt{\frac{2}{N}}$ with $M^2 \triangleq d M_2^2+d^2 \Delta^2 \tau^{-2}$ in   \eqref{eq:dghjsklslsl_proba} we obtain
\begin{align}
       \Prob & \left( \max_{y \in \Y} f\left( \hat x^N,y\right) -  \min_{x\in \X} f\left(x, \hat y^N \right) \geq \frac{4 \Omega^2 \sqrt{d} \sqrt{V_{z^1}(u)} M_2  }{\sqrt{N}} + \frac{2 \sqrt{2}  \sqrt{V_{z^1}(u)} M_2^2 d}{c M\sqrt{N}}   + \right. \nonumber\\
      & \hspace{2cm}\left. \frac{ \Omega \D \sqrt{d} \Delta }{\tau } + \Omega \frac{2 \sqrt{2} M_2^2 d \sqrt{V_{z^1}(u)}}{c M N} +  \frac{d^2\Delta^2 \sqrt{2} \sqrt{V_{z^1}(u)}}{2 \tau^2 M \sqrt{N} }+ \frac{\sqrt{V_{z^1}(u)} M}{\sqrt{2N} }  +2 \tau M_2 \right) \nonumber\\
      & \hspace{2cm} \leq \exp (-\Omega^2/12) + \exp(-3\sqrt{N} \Omega/4) + 2 \exp(-\Omega^2/3) + 6 \exp (-\Omega/4).
\end{align}

Using the notation of $M$ we obtain:
\begin{align}\label{eq:conv_proba}
       &\Prob  \left( \max_{y \in \Y} f\left( \hat x^N,y\right) -  \min_{x\in \X} f\left(x, \hat y^N \right) \geq \left(4 \Omega^2 + \frac{2 \sqrt{2}}{c} + \frac{2}{\sqrt{2}}\right) \frac{  \sqrt{V_{z^1}(u)} M  }{\sqrt{N}}   + \frac{ \Omega \D \sqrt{d} \Delta }{\tau }  +2 \tau M_2 + \right. \nonumber\\
      & \left.+ \Omega \frac{2 \sqrt{2} M \sqrt{V_{z^1}(u)}}{c N}    \right)  \leq \exp (-\Omega^2/12) + \exp (-3\sqrt{N} \Omega/4) + 2 \exp(-\Omega^2/3) + 6 \exp (-\Omega/4).
\end{align}

Now we need to get a more compact convergence result in the form $\Prob\left\{\hat{\e}_{\rm sad}(\hat z^N) \leq \e \right\} \geq 1 - \sigma$. For this we fix $\sigma > 0$ and let $\tau$ be chosen as $\tau = \O\left(\e/M_2\right)$. If moreover Assumption \ref{ass:err_noise} holds true with $\Delta =\O\left( \frac{\e^2}{\D M_2\sqrt{ d}}\right)$, then taking $(x, y) = (x^{\star}, y^{\star})$ in \eqref{eq:conv_proba} we can choose $\Omega$ small enough to obtain:
 \begin{align}\label{restart:rate_prob}
     \Prob & \left( f\left( \hat x^{N},y^{\star}\right) -  f\left(x^{\star}, \hat y^{N} \right) = \widetilde \O\left(\ M_2 \sqrt{d}\frac{\|z^1 -z^{\star}\|_p }{\sqrt{N}}  \right)   \right)  \geq 1 - \sigma.
 \end{align}
 We note that now the notation $\widetilde \O$ contains the factor $\log \sigma^{-1}$.

 {\bf Step 3.} (Restarts)\\
In this step we will employ the restart technique that is generalization of technique proposed in \cite{juditsky2014deterministic}.
  For the l.h.s. of   \eqref{restart:rate_prob} we use Assumption \ref{ass:strongly_convex} then with probability at least $1 - \sigma_1$ 
 \begin{align}\label{restart:norms_prob}
 \frac{\mu_{r}}{2} \underbrace{\|z^{N_1} - z^{\star}\|_p^{r}}_{R_1^r}
 &\leq
     f\left( \hat x^{N_1},y^{\star}\right) -   f\left(x^{\star}, \hat y^{N_1} \right) = \widetilde \O\left(\ M_2 \sqrt{d}\frac{\|z^1 -z^{\star}\|_p }{\sqrt{N_1}} \right).
 \end{align}
 Then taking ${N_1}$ so as to reduce the distance to the solution by half, we obtain
 $$ {N_1} = \widetilde \O_1 \left( \frac{M_2^2 d}{\mu_{r}^2 R_1^{2(r-1)}} \right).$$
 Next, after $N_1$ iterations, we restart the original method and set $z^1 = z^{N_1}$. We determine $N_2$ from a similar condition for reducing the distance $R_1$ to the solution by a factor of $2$, and so on.  
 
 We remind that at each restart step $i \in \{1,k\}$, the resulting formula \eqref{restart:norms_prob} is valid only with probability $1 - \sigma_i$. Thus, we choose $\sigma_i = \sigma/k$ and then by the union bound inequality all inequalities are satisfied simultaneously with probability at least $1 - \sigma$. We will determine the number of restarts further, but at this stage we use the fact that $k$ depends on the accuracy only logarithmically, which entails that notations $\widetilde \O_i$ and $\widetilde \O$ are equivalent $\forall i\in \{1,k\}$. Thus, after $k$ of such restarts, the total number of iterations will be
 \begin{align} \label{restart:N_prob}
     N = N_1 +\dots + N_k = \widetilde \O \left(\frac{ 2^{2(r-1)}d M_2^2 }{\mu_{r}^2 R_0^{2(r-1)}}\left(1+2^{2(r-1)} + \dots + 2^{2(k-1)(r-1)}\right) \right).
 \end{align} 
 It remains for us to determine the number of restarts, for this we fix the desired accuracy in terms of $\Prob\left\{\hat{\e}_{\rm sad}(\hat z^N) \leq \e \right\} \geq 1 - \sigma$ and using the inequality \eqref{restart:norms_prob} we obtain 
 \begin{align}\label{restart:epsilon_prob}
     \hat{\e}_{\rm sad} = \widetilde \O \left(\frac{\mu_{r} R_k^{r}}{2}\right)=   \frac{\sqrt{d} M_2  \widetilde \O\left(R_{k-1}\right)}{\sqrt{N_k}}  = \widetilde \O \left(\frac{\mu_{r}R_0^{r}}{2^{kr}}\right) \leq \e.
 \end{align} 
 Then to fulfill this condition one can choose $k =  \log_{2} (\widetilde \O\left( \mu_{r}R_0^{r}/\e\right)) / r$ and using   \eqref{restart:N} we get the total number of iterations
\[ N = \widetilde \O\left(\frac{2^{2k(r-1)}d M_2^2 }{\mu_{r}^2 R_0^{2(r-1)}} \right) = \widetilde \O\left(\frac{dM_2^2 }{\mu_{r}^{2/r} \e^{2(r-1)/r}} \right).\]

\end{proof}

\end{document}